\documentclass[reqno, 12pt]{amsart}
\usepackage{amsmath,amssymb,tikz,stix2}

\newtheorem{thm}{Theorem}
\newtheorem{lem}[thm]{Lemma}

\newtheorem{definition}[thm]{Definition}
\newtheorem{conj}[thm]{Conjecture}

\theoremstyle{definition}
\newtheorem{example}[thm]{Example}

\newtheorem*{thma}{Theorem A}
\newtheorem*{thmb}{Theorem B}
\newtheorem*{thmc}{Theorem C}

\newcommand{\C}{{\mathbb C}}
\newcommand{\D}{{\mathbb D}}

\newcommand{\ind}{\int_\D}

\newcommand{\cn}{\C^n}

\newcommand{\bn}{{\mathbb B}_n}
\newcommand{\sn}{{\mathbb S}^n}
\newcommand{\ins}{\int_{\sn}}
\newcommand{\inb}{\int_{\bn}}

\newcommand{\R}{{\mathbb R}}

\begin{document}

\title[Embeddings between Bergman and Hardy spaces]
{Embedding and compact embedding between\\ Bergman and Hardy spaces}

\author{Guanlong Bao}
\address[Guanlong Bao]{Department of Mathematics, Shantou University,
Shantou, Guangdong 515821, China}
\email{glbao@stu.edu.cn}

\author{Pan Ma}
\address[Pan Ma]{School of Mathematics and Statistics, HNP-LAMA,
Central South University, Changsha, Hunan 410083, China}
\email{pan.ma@csu.edu.cn}

\author{Fugang Yan}
\address[Fugang Yan]{College of Mathematics and Statistics, Chongqing University, 
Chongqing 401331, China; and Key Laboratory of Nonlinear Analysis and its 
Applications (Chongqing University), Ministry of Education, 
Chongqing 401331, China}
\email{fugangyan@cqu.edu.cn}

\author{Kehe Zhu}
\address[Kehe Zhu]{Department of Mathematics and Statistics,
State University of New York, Albany, NY 12222, USA.}
\email{kzhu@albany.edu}

\thanks{Bao is supported by NNSF of China (Grant number 12271328).
Ma is supported by NNSF of China (Grant numbers 11801572 and 12171484),
the Natural Science Foundation of Hunan Province (Grant number 2023JJ20056),
the Science and Technology Innovation Program of Hunan Province (Grant number
2023RC3028), and Central South University Innovation-Driven Research
Programme (Grant number 2023CXQD032). Yan is supported by NNSF of
China (Grant number 12401150) and the Fundamental Research Funds for the Central Universities (2024CDJXY018).}

\subjclass[2020]{32A35 and 32A36.}
\keywords{Bergman spaces, Hardy spaces, embeddings, compact embeddings,
contractive embeddings, tight fittings.}

\begin{abstract}
For Hardy spaces and weighted Bergman spaces on the open unit ball in $\cn$, we
determine exactly when $A^p_\alpha\subset H^q$ or $H^p\subset A^q_\alpha$,
where $0<q<\infty$, $0<p<\infty$, and $-\infty<\alpha<\infty$. For each such
inclusion we also determine exactly when it is a compact embedding. Although
some special cases were known before, we are able to completely cover all possible
cases here. We also introduce a new notion called {\it tight fitting} and formulate
a conjecture in terms of it, which places several prominent known results about
contractive embeddings in the same framework.
\end{abstract}

\maketitle

\section{Introduction}

For $n\ge1$ let $\bn$ be the unit ball in $\cn$ and let $dv$ denote the Lebesgue volume
measure on $\bn$. If $\alpha\in\R=(-\infty,\infty)$ is a real parameter, we write
$$dv_\alpha(z)=(1-|z|^2)^\alpha\,dv(z).$$
It is easy to see that $dv_\alpha$ is finite if and only if $\alpha>-1$. Throughout the
paper, whenever $\alpha>-1$, we also normalize $dv_\alpha$ such that $v_\alpha(\bn)=1$.

For $0<p<\infty$ and $\alpha\in\R$ we use $A^p_\alpha$ to denote the space of all
holomorphic functions $f$ on $\bn$ such that the function $(1-|z|^2)^NR^Nf(z)$ belongs
to $L^p(\bn, dv_\alpha)$, where $N$ is any non-negative integer such that
$Np+\alpha>-1$ and $R$ is the so-called radial differential operator defined on
holomorphic functions by
$$Rf(z)=z_1\frac{\partial f}{\partial z_1}(z)+\cdots+z_n\frac{\partial f}{\partial z_n}(z).$$
It is known that the space $A^p_\alpha$ is independent of the choice of the non-negative
integer $N$. The spaces $A^p_\alpha$ were called weighted Bergman spaces and
systematically studied in \cite{ZZ}.

For $\alpha>-1$ and $p>0$ we can choose $N=0$ above, so that
$$A^p_\alpha=L^p(\bn, dv_\alpha)\cap H(\bn),$$
where $H(\bn)$ denotes the space of all holomorphic functions on $\bn$. In this
case, we will write
$$\|f\|_{p,\alpha}=\left[\inb|f(z)|^p\,dv_\alpha(z)\right]^{1/p},\qquad f\in A^p_\alpha.$$
These are standard weighted Bergman spaces.

When $\alpha\le-1$, we let $N$ be the smallest positive integer such that $Np+\alpha>-1$
and write
$$\|f\|_{p,\alpha}=|f(0)|+\left[\inb\left|(1-|z|^2)^NR^Nf(z)\right|^p\,dv_\alpha(z)
\right]^{1/p},\qquad f\in A^p_\alpha.$$
These are more general ``weighted Bergman spaces'', which are sometimes called
holomorphic Sobolev spaces in the literature.

If $0<p<\infty$ and $d\sigma$ is the normalized Lebesgue measure on the unit sphere
$\sn=\partial\bn$, then the Hardy space $H^p$ consists of all holomorphic functions $f$
on $\bn$ such that
$$\|f\|^p_{H^p}=\sup_{0<r<1}\ins|f(r\zeta)|^p\,d\sigma(\zeta)<\infty.$$
See \cite{R, Zhu1} for basic properties of Hardy spaces.

The purpose of this paper is to completely determine when we have an embedding
$H^p\subset A^q_\alpha$ or $A^p_\alpha\subset H^q$. By the closed-graph theorem,
whenever we have such an embedding, the inclusion map is actually a bounded linear
operator. We will also completely determine when such an embedding is compact.

At first glance some readers may feel that the problem seems elementary, as we
often think that Hardy spaces are smaller than Bergman spaces. This instinct is only
partially right. It depends on how you make comparisons: if you compare Hardy and
standard (unweighted) Bergman spaces with the same exponents, then of course
the problem is simple. But if the exponents are different and the weights for Bergman
spaces are arbitrary, then the comparison can sometimes be very difficult to make.

For example, a classical theorem of Carleman's states that, on the unit disc, the Hardy
space $H^1$ is contained in the Bergman space $A^2$, and the inclusion is contractive;
see \cite{C}. This turns out to be a very deep and important result in complex analysis.
In fact, the exponent $2$ is optimal: if $p>2$, then $H^1$ is no longer contained
in $A^p\,$!

Since the Hardy spaces $H^p$ and weighted Bergman spaces $A^p_\alpha$ are not
Banach spaces when $0<p<1$, we need to specify the meaning of a compact
embedding. Thus we say that an embedding $i: X\to Y$ is compact if $\|f_n\|_Y\to0$
as $n\to\infty$ whenever $\{f_n\}$ is a bounded sequence in $X$ such that
$f_n(z)\to0$ uniformly on compact subsets of $\bn$.

Our characterizations of embedding and compact embedding between Hardy and
weighted Bergman spaces will depend on many different conditions on $p$, $q$, and
$\alpha$. In the statements of all our results, we will always use $p$ for the exponent
on the left-hand side and $q$ for the exponent on the right-hand side. We organize
our main results into the following two theorems for the two cases
$p<q$ and $p\ge q$, respectively.

\begin{thma}
Suppose $0<p<q<\infty$ and $\alpha\in\R$. Then
\begin{enumerate}
\item[(a)] $A^p_\alpha\subset H^q$ if and only if
\begin{equation}
\frac{n+1+\alpha}p\le\frac nq.
\label{eq1}
\end{equation}
\item[(b)] $H^p\subset A^q_\alpha$ if and only if
\begin{equation}
\frac np\le\frac{n+1+\alpha}q.
\label{eq2}
\end{equation}
\item[(c)] Any embedding in (a) and (b) is compact if and only if strict inequality
holds for the parameters.
\end{enumerate}
\end{thma}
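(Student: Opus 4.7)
The plan is to reduce both embeddings to the classical embedding between two standard weighted Bergman spaces $A^p_a\hookrightarrow A^q_b$ with $p<q$ and $a,b>-1$, for which it is well known that the inclusion is bounded iff $(a+n+1)/p\le(b+n+1)/q$ and compact iff the inequality is strict. The bridge is the radial derivative characterization, which handles Hardy and Bergman spaces uniformly: for any $N\ge 1$ with $Np+\alpha>-1$,
\[
\|f\|_{p,\alpha}^p\asymp|f(0)|^p+\inb|R^Nf(z)|^p(1-|z|^2)^{Np+\alpha}\,dv(z),
\]
and for any $N\ge 1$,
\[
\|f\|_{H^p}^p\asymp|f(0)|^p+\inb|R^Nf(z)|^p(1-|z|^2)^{Np-1}\,dv(z),
\]
so the Hardy space formally plays the role of the Bergman space with $\alpha=-1$ at the level of derivatives.

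Choosing $N$ sufficiently large, the embedding $A^p_\alpha\subset H^q$ transforms under $f\mapsto R^Nf$ into the Bergman embedding $A^p_{Np+\alpha}\hookrightarrow A^q_{Nq-1}$, whose scaling condition $(Np+\alpha+n+1)/p\le(Nq-1+n+1)/q$ collapses to \eqref{eq1}. Likewise, $H^p\subset A^q_\alpha$ transforms into $A^p_{Np-1}\hookrightarrow A^q_{Nq+\alpha}$, whose scaling condition reduces to \eqref{eq2}; one first notices that \eqref{eq2} combined with $p<q$ forces $\alpha>-1$, keeping us within the standard Bergman regime. Replacing $\le$ by $<$ throughout and observing that a bounded sequence in the source space tending to $0$ on compact sets has $R^Nf_n(z)\to 0$ and $f_n(0)\to 0$, the same reduction yields the sufficiency direction of (c) from the compact case of the Bergman embedding theorem.

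For necessity of \eqref{eq1}, \eqref{eq2}, and of the strict inequality for compactness, I would test against the family $f_w(z)=(1-\langle z,w\rangle)^{-c}$ with $c$ large. The classical integral asymptotics
\[
\inb\frac{(1-|z|^2)^s\,dv(z)}{|1-\langle z,w\rangle|^t}\asymp(1-|w|^2)^{n+1+s-t},\qquad\ins\frac{d\sigma(\zeta)}{|1-\langle\zeta,w\rangle|^t}\asymp(1-|w|^2)^{n-t}
\]
(valid once $t$ dominates the other parameters, with a straightforward $R^N$-computation in the Sobolev case $\alpha\le-1$) yield $\|f_w\|_{p,\alpha}\asymp(1-|w|^2)^{(n+1+\alpha)/p-c}$ and $\|f_w\|_{H^q}\asymp(1-|w|^2)^{n/q-c}$ as $|w|\to 1^-$. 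Matching exponents forces \eqref{eq1} and \eqref{eq2} respectively; at equality, $f_w/\|f_w\|$ is a unit-norm sequence in the source that converges to $0$ on compacta whose image stays bounded away from $0$ in the target, obstructing compactness.

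The main technical hurdle is ensuring that the two weights in the reduced Bergman problem stay strictly above $-1$: for $A^p_\alpha\subset H^q$ this is automatic from the choice of $N$, while for $H^p\subset A^q_\alpha$ it rests on the observation above that \eqref{eq2} forces $\alpha>-1$. Once this is verified and the Hardy-space derivative characterization is quoted carefully, both (a) and (b), along with part (c), follow at once from the classical Bergman embedding theorem.
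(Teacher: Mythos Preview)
Your reduction hinges on the claimed equivalence
\[
\|f\|_{H^p}^p\asymp|f(0)|^p+\inb|R^Nf(z)|^p(1-|z|^2)^{Np-1}\,dv(z),
\]
i.e., on $H^p=A^p_{-1}$ with comparable norms. This is false for every $p\ne2$: the right-hand side is precisely the norm of $A^p_{-1}$, and the paper's Lemma~\ref{11} records the classical Hardy--Littlewood dichotomy $A^p_{-1}\subsetneq H^p$ for $0<p<2$ and $H^p\subsetneq A^p_{-1}$ for $p>2$. Thus the Hardy space does \emph{not} play the role of the Bergman space with $\alpha=-1$ at the level of derivatives, and the transformation $f\mapsto R^Nf$ does not turn the Hardy--Bergman embedding into a Bergman--Bergman one. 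Concretely, in your reduction of $A^p_\alpha\subset H^q$ to $A^p_{Np+\alpha}\hookrightarrow A^q_{Nq-1}$ you only obtain $A^p_\alpha\subset A^q_{-1}$; when $q>2$ this is strictly weaker than $A^p_\alpha\subset H^q$. Symmetrically, for $H^p\subset A^q_\alpha$ with $p<2$ there exist $f\in H^p$ with $R^Nf\notin A^p_{Np-1}$, so the source side of the reduced inequality is already infinite and yields nothing.

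Your necessity and non-compactness arguments via the test family $(1-\langle z,w\rangle)^{-c}$ are sound and are essentially what the paper does in Theorems~\ref{6}, \ref{8}, and \ref{9} (the paper phrases necessity through optimal pointwise growth, which amounts to the same test functions). The sufficiency direction, however, genuinely requires an input beyond Theorem~C: the paper imports the critical-index embeddings $H^p\subset A^q_{nq/p-(n+1)}$ and $A^p_{np/q-(n+1)}\subset H^q$ from Beatrous--Burbea (Lemmas~\ref{5} and \ref{7}), then shifts $\alpha$ monotonically via Lemma~\ref{2}. The moral is that the ``formal'' substitution $\alpha=-1$ in Theorem~C gives the right answer but not a proof; this is exactly the point the paper flags after stating Theorem~C.
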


Several special cases of Theorem A above and Theorem B below are known,
sometimes in slightly different form. See Theorems 5.12 and 5.13 in \cite{BB},
Corollary 1.3 in \cite{K}, and Theorem 4.48 in \cite{Zhu1} for example. As far
as we know, the compactness of these embeddings has not been systematically
studied before, except in the context of Carleson measures in some special
cases (see \cite{PZ,P}). In the case of embeddings between two weighted
Bergman spaces, compactness was characterized in \cite{Zhu3}.

\begin{thmb}
Suppose $0<q\le p<\infty$ and $\alpha\in\R$.
\begin{enumerate}
\item[(a)] For $q\le2$ we have $A^p_\alpha\subset H^q$ if and only if $\alpha\le-1$.
For $q>2$ we have $A^p_\alpha\subset H^q$ if and only if $\alpha<-1$.
\item[(b)] For $q<2$ we have $H^p\subset A^q_\alpha$ if and only if $\alpha>-1$.
For $q\ge2$ we have $H^p\subset A^q_\alpha$ if and only if $\alpha\ge-1$.
\item[(c)] Any embedding in (a) and (b) is compact if and only if the inequality
involving $\alpha$ is strict.
\end{enumerate}
\end{thmb}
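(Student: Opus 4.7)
The argument rests on the classical norm equivalence $H^2\simeq A^2_{-1}$ on $\bn$ (the Littlewood--Paley / Hardy--Stein identity), coupled with two families of elementary inclusions: weight monotonicity $A^p_\alpha\subset A^p_\beta$ for $\alpha\le\beta$, and the Hardy inclusion $H^s\subset H^r$ for $s\ge r$ (since $d\sigma$ is a probability measure). The plan is to route embeddings through the pivot $A^2_{-1}=H^2$ for sufficiency, and to exhibit explicit test functions adapted to each forbidden configuration for necessity.

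For part (a) sufficiency, given $\alpha\le -1$ (or $\alpha<-1$ when $q>2$), first reduce to the endpoint $\alpha=-1$ by weight monotonicity. If $p\ge 2$, chain $A^p_{-1}\subset A^2_{-1}=H^2\subset H^q$: the first step is a Bergman-to-Bergman inclusion obtained by differentiating once to pass into positive-weight Bergman spaces and then applying H\"older's inequality on the finite measure $dv_{p-1}$; the last step is the Hardy inclusion. If $p\le 2$, establish $A^p_{-1}\subset H^p$ directly from the Littlewood--Paley $g$-function characterization of $H^p$, where for $p\le 2$ Minkowski's integral inequality yields $\|f\|_{H^p}\lesssim \|f\|_{A^p_{-1}}$; then descend to $H^q$ via the Hardy inclusion. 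In the subcase $q>2$ the strict inequality $\alpha<-1$ leaves enough slack to insert an auxiliary $A^{p_0}_{-1}=H^{p_0}$ with $p_0\ge q$ into the chain.

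For necessity in (a) and for (b), I would produce explicit counterexamples ruling out each case where the inequality on $\alpha$ fails. The primary test family is $f_w(z)=(1-\langle z,w\rangle)^{-s}$ with $|w|\to 1$: standard Forelli--Rudin integrals give sharp asymptotic $A^p_\alpha$- and $H^q$-norms, and tuning the exponent $s$ locates a function in $A^p_\alpha\setminus H^q$ (or vice versa) whenever the numerical inequality on $(\alpha,p,q)$ is violated. For the sharpest borderline $\alpha=-1$ with $q$ on the ``wrong'' side of $2$, I would replace the single kernel with a lacunary series $\sum a_k z_1^{N_k}$: Paley's theorem pins down $H^q$-membership via a dimension-dependent square-summability of $(a_k)$, while a Littlewood--Paley computation gives an $\ell^p$-type criterion for $A^p_{-1}$, and the asymmetry of these two coefficient norms at $\alpha=-1$ produces the counterexample precisely when $p$ and $q$ straddle $2$. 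Part (b) is the dual statement, proved symmetrically.

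Part (c) is a standard shell argument. ``Strict implies compact'': a bounded sequence in the source converging to $0$ uniformly on compacta is squeezed, thanks to the strict inequality, into a strictly smaller intermediate space whose boundary-shell contributions vanish uniformly; combined with local uniform convergence this gives convergence to $0$ in the target. Conversely, at the boundary of the admissible region, the normalized kernel family $f_{w_n}/\|f_{w_n}\|_{\text{source}}$ with $|w_n|\to\partial\bn$ is bounded, tends to $0$ uniformly on compacta, yet retains a fixed positive target norm, witnessing non-compactness. The hardest step I anticipate is the borderline necessity at $\alpha=-1$ in the off-diagonal cases above: no single Forelli--Rudin kernel is sharp enough, and the lacunary computation must be executed carefully because the sphere distribution of $\zeta_1\in\sn$ takes noticeably different form when $n=1$ (circle measure) versus $n\ge 2$ (Beta-weighted disk measure), so that the Paley and Littlewood--Paley estimates can be compared on the same footing.
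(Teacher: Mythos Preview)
Your sufficiency arguments for part (a) are largely aligned with the paper's, but the phrase ``insert an auxiliary $A^{p_0}_{-1}=H^{p_0}$ with $p_0\ge q$'' is wrong: the identity $A^{p_0}_{-1}=H^{p_0}$ holds only at $p_0=2$. For $q>2$ the paper instead reduces via $A^p_\alpha\subset A^q_\alpha$ and then bootstraps from Theorem~A (the already-proved case of a strictly smaller exponent on the left): since $\alpha<-1$, one can choose $\varepsilon>0$ with $(n+1+\alpha)/(q-\varepsilon)\le n/q$, whence $A^{q-\varepsilon}_\alpha\subset H^q$. This bootstrap from Theorem~A is the missing step in your chain.

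Your necessity scheme via kernels $f_w(z)=(1-\langle z,w\rangle)^{-s}$ reaches less than you claim. Forelli--Rudin asymptotics only disprove $A^p_\alpha\subset H^q$ when $(n+1+\alpha)/p>n/q$, i.e.\ when $\alpha>np/q-(n+1)$; for $q<p$ this threshold sits strictly above $-1$, so there is a whole interval $-1<\alpha\le np/q-(n+1)$ where kernels are inconclusive and the lacunary construction would be needed there too, not only at the borderline $\alpha=-1$. The paper sidesteps all of this: for part (a) necessity it composes a hypothetical inclusion $A^p_\alpha\subset H^q$ with the trivial $H^q\subset A^q_\beta$ (all $\beta>-1$ when $q\le2$, or $\beta=-1$ when $q>2$) and then invokes Theorem~C on the resulting Bergman-to-Bergman inclusion, letting $\beta\to-1^+$. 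This delivers the sharp condition on $\alpha$ without constructing a single test function.

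The largest divergence is part (b). You dismiss it as ``the dual statement, proved symmetrically'', but in the paper this is the heaviest part: the characterization of $H^p\subset A^q_\alpha$ for $q<p$ is obtained from a Luecking--Arsenovi\'c embedding theorem (derivatives of $H^p$ into $L^q(d\mu)$), which reduces the question to a tent-type integrability condition on $g(z)=(1-|z|^2)^{1+\alpha}$ over admissible approach regions $\Gamma(\zeta)$, and the case split at $q=2$ emerges from the $A_{2/(2-q)}$ versus $A_\infty$ dichotomy built into that theorem. Your lacunary route for the borderline $\alpha=-1$, $q<2$ is plausible and, if executed, more elementary than invoking Luecking's machinery; but it is not symmetric to part (a), and the higher-dimensional computation (where $\zeta_1$ on $\sn$ has a Beta-weighted disk law for $n\ge2$, and the $A^q_{-1}$ norm of a lacunary series must be estimated via a Littlewood--Paley integral) is genuinely delicate and not supplied. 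As written, part (b) is a gap rather than a proof.
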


It is well known that $H^2=A^2_{-1}$; see \cite{ZZ} for example. As a consequence
of Theorems A and B, we see that the only chance for $H^p=A^q_\alpha$ is when
$p=q=2$ and $\alpha=-1$. Actually, at least in the case $p\ge1$, $H^p$ and
$A^p_\alpha$ are not even isomorphic with equivalent ``norms'' unless $p=2$.
See \cite{W} and our Lemma~\ref{10} later.

It is well known that
$$\lim_{\alpha\to-1^+}\inb|f(z)|^p\,dv_\alpha(z)=\|f\|^p_{H^p}$$
for $f\in H^p$. See \cite{Zhu2} for a proof in the one-dimensional case.
The proof for higher dimensions is similar. The limit above indicates that, in some sense,
$H^p$ can be thought of as $A^p_\alpha$ as $\alpha\to-1$. It is then natural for us
to compare Theorems A and B above to the previously-known results about embeddings
between weighted Bergman spaces (see \cite{ZZ, Zhu3} for example), which we state
as follows for the convenience of readers.

\begin{thmc}
For $0<p,q<\infty$ and $\alpha,\beta\in\R$ we have
\begin{enumerate}
\item[(a)] If $p\le q$, then $A^p_\alpha\subset A^q_\beta$ if and only if
\begin{equation}
\frac{n+1+\alpha}p\le\frac{n+1+\beta}q.
\label{eq3}
\end{equation}
\item[(b)] If $p>q$, then $A^p_\alpha\subset A^q_\beta$ if and only if
\begin{equation}
\frac{1+\alpha}p<\frac{1+\beta}q.
\label{eq4}
\end{equation}
\item[(c)] Embeddings in the case $p>q$ are always compact, while embeddings
for $p\le q$ are compact if and only if strict inequality in (\ref{eq3}) holds.
\end{enumerate}
\end{thmc}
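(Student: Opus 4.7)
The plan is to reduce to the genuine-Bergman case $\alpha,\beta>-1$ by a single derivative shift, and then attack the two regimes $p\le q$ and $p>q$ by distinct methods: pointwise growth estimates in the first case, and H\"older's inequality in the second. Necessity and compactness can then be read off from test-function calculations with reproducing-kernel-type atoms.

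\emph{Reduction.} I would choose one non-negative integer $k$ large enough so that $\alpha+kp>-1$ and $\beta+kq>-1$ simultaneously. Using the norm equivalence $\|f\|_{p,\alpha}\asymp|f(0)|+\|(1-|z|^2)^k R^k f\|_{L^p(dv_\alpha)}$, the inclusion $A^p_\alpha\subset A^q_\beta$ transforms into $A^p_{\alpha+kp}\subset A^q_{\beta+kq}$. The two quantities $(n+1+\alpha)/p$ and $(n+1+\beta)/q$ both shift by the same constant $k$, as do $(1+\alpha)/p$ and $(1+\beta)/q$, so conditions \eqref{eq3} and \eqref{eq4} are invariant. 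Hence I may assume $\alpha,\beta>-1$ and use the plain integral norms $\|f\|_{p,\alpha}^p=\int|f|^p\,dv_\alpha$.

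\emph{Sufficiency.} For $p\le q$, sub-mean-value of $|f|^p$ over a fixed-radius pseudohyperbolic ball centered at $z$ gives the pointwise bound $|f(z)|\lesssim(1-|z|^2)^{-(n+1+\alpha)/p}\|f\|_{p,\alpha}$. Writing $|f|^q=|f|^{q-p}|f|^p$ and pulling this estimate out yields
$$\|f\|_{q,\beta}^q\lesssim\|f\|_{p,\alpha}^{q-p}\int_{\bn}|f(z)|^p(1-|z|^2)^{\beta-(n+1+\alpha)(q-p)/p}\,dv(z),$$
and the weight exponent is at least $\alpha$ exactly when \eqref{eq3} holds, finishing this direction. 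For $p>q$, applying H\"older's inequality to $\int|f|^q (1-|z|^2)^{\alpha q/p}(1-|z|^2)^{\beta-\alpha q/p}\,dv$ with conjugate exponents $p/q$ and $p/(p-q)$ gives $\|f\|_{q,\beta}^q\le\|f\|_{p,\alpha}^q\cdot C$, where $C$ is a fractional power of $\int_{\bn}(1-|z|^2)^{(\beta-\alpha q/p)p/(p-q)}\,dv$; this integral is finite precisely under \eqref{eq4}.

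\emph{Necessity.} In all cases I would test with the functions $f_a(z)=(1-|a|^2)^\gamma/(1-\langle z,a\rangle)^\delta$ as $|a|\to 1^-$, and extract the required inequality from the Forelli--Rudin asymptotics of $\int(1-|z|^2)^s|1-\langle z,a\rangle|^{-t}\,dv_\alpha$. A suitable pair $(\gamma,\delta)$ forces \eqref{eq3} directly when $p\le q$. In the $p>q$ case, a single atom yields only the non-strict version of \eqref{eq4}, so the strict inequality has to come from a superposition argument: pick a lacunary sequence $\{a_j\}\subset\bn$ that is separated in the Bergman metric, form $f=\sum c_j f_{a_j}$ (with random $\pm$ signs and Khintchine's inequality if convenient), and observe that the $A^p_\alpha$-side decouples into an $\ell^p$-sum while the $A^q_\beta$-side decouples into an $\ell^q$-sum; choosing $\{c_j\}$ in $\ell^p\setminus\ell^q$ obstructs boundedness exactly on the boundary of \eqref{eq4}. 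I expect this step to be the genuine difficulty of the proof, since it demands quantitative control over interactions between widely-separated reproducing kernels.

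\emph{Compactness.} For $p>q$ with \eqref{eq4} holding, compactness is automatic: given a bounded sequence $\{f_n\}$ in $A^p_\alpha$ with $f_n\to 0$ locally uniformly, split the H\"older estimate over $\{|z|\le r\}$ and $\{r<|z|<1\}$; the first piece vanishes as $n\to\infty$ by uniform convergence and the second as $r\to1^-$ by integrability of the weight. For $p\le q$ with strict inequality in \eqref{eq3}, the pointwise bound above has an extra decay factor that can be spent to produce boundary smallness, while failure of compactness at equality in \eqref{eq3} is witnessed by the normalized reproducing kernels $k_a$, which converge to zero on compacta but retain a uniform lower bound on $\|k_a\|_{q,\beta}$ derived from the same Forelli--Rudin estimates.
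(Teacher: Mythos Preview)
The paper does not prove Theorem~C at all: it is quoted from \cite{ZZ,Zhu3} as a known background result used in the proofs of Theorems~A and~B. So there is no in-paper argument to compare against, and your sketch has to stand on its own.

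Your reduction via a common order-$k$ radial derivative is correct (the fractional operators $R^{s,k}$ in \cite{ZZ} give simultaneous isomorphisms $A^p_\alpha\to A^p_{\alpha+kp}$ and $A^q_\beta\to A^q_{\beta+kq}$, so the inclusion transfers as you claim), and both sufficiency arguments are the standard ones. The compactness paragraphs are fine as well.

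There is one inaccuracy and one genuine soft spot, both in the $p>q$ necessity step. First, a single kernel $f_a$ does \emph{not} yield the non-strict form of~\eqref{eq4}; the Forelli--Rudin asymptotics cancel $\delta$ and leave exactly $(n+1+\alpha)/p\le(n+1+\beta)/q$, i.e.\ condition~\eqref{eq3}, which for $p>q$ is strictly weaker than $(1+\alpha)/p\le(1+\beta)/q$. This does not kill your plan, since you already intend to extract the strict inequality from a separated sequence. Second, that decoupling step is the real content and is only sketched: atomic decomposition gives the upper bound $\bigl\|\sum_j c_j f_{a_j}\bigr\|_{p,\alpha}\lesssim\|(c_j)\|_{\ell^p}$, but the matching lower bound on the $A^q_\beta$ side is not automatic and must be argued (for instance by restricting to disjoint Bergman balls $D(a_j,r)$ and using the sub-mean-value estimate, with coefficients $c_j$ chosen so that the resulting weighted $\ell^q$-sum diverges precisely when $(1+\alpha)/p=(1+\beta)/q$). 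The published proofs in \cite{Zhu3} and in Luecking's work take a different route: they recast the embedding $A^p_\alpha\subset L^q(d\mu)$ as a Carleson-type condition and use the known $p>q$ characterization that the averaging function $\widehat\mu(z)=\mu(D(z,r))/(1-|z|^2)^{n+1+\alpha}$ lie in $L^{p/(p-q)}(dv_\alpha)$, from which the strict inequality in~\eqref{eq4} drops out by a direct integral computation. Your lacunary-sequence approach is viable but requires more work to make rigorous than the Carleson-measure route.
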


It is clear that Theorem A is {\em formally} a consequence of part (a) in Theorem C,
when $A^p_{-1}$ (or $A^q_{-1}$) is replaced by $H^p$ (or $H^q$) and $\alpha$
(or $\beta$) is replaced by $-1$. But Theorem B is much more complicated.
There are several delicate differences between Theorem C and Theorems A/B.
In particular, there is a difference between how the exponent ranges are broken down,
namely, ``$p\le q$ and $p>q$" versus ``$p<q$ and $p\ge q$".

This paper is motivated by a colloquium talk given by Professor Congwen Liu at
Shantou University in the summer of 2023, where he talked about an open problem on
contractive embeddings. We are grateful to Professor Liu for many other discussions.

\section{Preliminaries on Bergman spaces}

Recall that
$$dv_\alpha(z)=(1-|z|^2)^\alpha\,dv(z)$$
for $\alpha\le-1$. For $\alpha>-1$ we choose a positive constant $c_\alpha$ such that
$$dv_\alpha(z)=c_\alpha(1-|z|^2)^\alpha\,dv(z)$$
is a probability measure on $\bn$. For $p>0$ and $\alpha>-1$ the space
$$A^p_\alpha=L^p(\bn, dv_\alpha)\cap H(\bn)$$
is traditionally called a weighted Bergman space (with standard weights). This definition
does not work for $\alpha\le-1$, because in this case the measure
$dv_\alpha$ is infinite and the only function $f\in H(\bn)$ satisfying
$$\inb|f(z)|^p(1-|z|^2)^\alpha\,dv(z)<\infty$$
is the zero function.

The definition of $A^p_\alpha$ for $\alpha\le-1$ must involve derivatives. One way is
to use radial derivatives as in the Introduction above. Another way is to use ordinary
partial derivatives as follows: for any $p\in(0,\infty)$ and $\alpha\in\R$ choose a
non-negative integer $N$ such that $Np+\alpha>-1$ and define $A^p_\alpha$ as the
space of $f\in H(\bn)$ such that the functions
$$(1-|z|^2)^N\partial^mf(z),\qquad |m|=N,$$
all belong to $L^p(\bn, dv_\alpha)$, where $m=(m_1,\cdots,m_n)\in{\mathbb Z}^n_+$
with $|m|=m_1+\cdots+m_n$. It is known that the space $A^p_\alpha$ is independent
of the choice of $N$ and it coincides with the $A^p_\alpha$ defined in the Introduction.
See \cite{ZZ} for all this and more.

We call the spaces $A^p_\alpha$ weighted Bergman spaces, which is a widely accepted
term when $\alpha>-1$. For $\alpha\le-1$, these spaces have been studied in the
literature before the paper \cite{ZZ} appeared, and they were sometimes called
holomorphic Sobolev spaces and sometimes called holomorphic Besov spaces.
See \cite{BB} and references there.

The family of spaces $A^p_\alpha$ includes many classical function spaces as special
cases. Other than the natural weighted Bergman spaces when $\alpha>-1$, we have
the following prominent examples: $A^2_{-1}=H^2$; $A^2_{-n}$ is the so-called
Drury-Arveson space which is extremely important in multivariable operator theory;
$A^2_{-(n+1)}$ is the so-called Dirichlet space whose reproducing kernel is given by
$K(z,w)=1-\log(1-\langle z,w\rangle)$; and $A^p_{-(n+1)}$ are the so-called diagonal
Besov spaces. This shows that our problem of determining $H^p\subset A^q_\alpha$
and $A^p_\alpha\subset H^q$ is meaningful, non-trivial, and interesting.
A lot of classical and important holomorphic function spaces are covered here.

For convenience, we single out two special cases of Theorem C and state
them as the following two lemmas, which we will have to use many times later.

\begin{lem}
Fix $\alpha\in\R$ and suppose $0<p_1<p_2<\infty$. Then $A^{p_2}_\alpha
\subset A^{p_1}_\alpha$, and the inclusion is proper and compact.
\label{1}
\end{lem}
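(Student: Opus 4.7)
The plan is to read Lemma~\ref{1} off as a direct consequence of Theorem~C applied with $\beta=\alpha$ and $(p,q)=(p_2,p_1)$. Since $p_2>p_1$ we are in case~(b), whose hypothesis reduces to the elementary inequality $1/p_2<1/p_1$; the ``embeddings in the case $p>q$ are always compact'' clause in part~(c) of Theorem~C then yields compactness at no extra cost. In the standard range $\alpha>-1$ there is also a direct and quick proof: because $dv_\alpha$ is a probability measure on $\bn$, H\"older's inequality with exponent $p_2/p_1>1$ gives $\|f\|_{p_1,\alpha}\le\|f\|_{p_2,\alpha}$, so the inclusion is actually contractive, and compactness follows from a routine normal-families plus uniform-integrability argument applied to any bounded sequence in $A^{p_2}_\alpha$ converging to zero locally uniformly on $\bn$.

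For the proper (strict) part I would exhibit an explicit separating function using the standard power-type family $f_s(z)=(1-\langle z,e_1\rangle)^{-s}$ attached to a fixed boundary point $e_1\in\partial\bn$. The classical integral estimate gives $f_s\in A^p_\alpha$ if and only if $sp<n+1+\alpha$, so any $s$ in the nonempty open interval $((n+1+\alpha)/p_2,\,(n+1+\alpha)/p_1)$ produces a function lying in $A^{p_1}_\alpha\setminus A^{p_2}_\alpha$.

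The main bookkeeping obstacle will be the case $\alpha\le-1$, where $dv_\alpha$ is infinite and $A^p_\alpha$ is the derivative-defined Sobolev-type space. I would handle it by choosing a non-negative integer $N$ with $Np_1+\alpha>-1$; since $p_1<p_2$, the same $N$ automatically satisfies $Np_2+\alpha>-1$. The characterization $f\in A^{p_i}_\alpha \iff R^Nf\in A^{p_i}_{Np_i+\alpha}$ (with both weights $Np_i+\alpha$ now in the standard range $>-1$) shifts the whole question to an embedding between ordinary weighted Bergman spaces, which is already covered by Theorem~C applied at the level of $R^Nf$. Properness in this range can be witnessed by the same power function $f_s$, once one checks the analogous integrability threshold for $R^Nf_s$ (which reproduces the same condition $sp<n+1+\alpha$), and compactness is again inherited from part~(c) of Theorem~C.
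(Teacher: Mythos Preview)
Your overall plan matches the paper's: the authors simply present Lemma~\ref{1} as a special case of Theorem~C and remark that a direct proof is an easy exercise. But there is a genuine slip in your verification. You assert that with $\beta=\alpha$ the hypothesis of Theorem~C(b) ``reduces to the elementary inequality $1/p_2<1/p_1$''; this is correct only when $1+\alpha>0$. For $\alpha=-1$ the condition $(1+\alpha)/p_2<(1+\alpha)/p_1$ reads $0<0$, and for $\alpha<-1$ the factor $1+\alpha$ is negative so the inequality actually reverses. Thus Theorem~C(b), taken at face value, does \emph{not} deliver $A^{p_2}_\alpha\subset A^{p_1}_\alpha$ when $\alpha\le-1$. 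Your derivative-shift in the last paragraph does not escape this: after passing to $R^Nf$ you need $A^{p_2}_{Np_2+\alpha}\subset A^{p_1}_{Np_1+\alpha}$, and the Theorem~C(b) criterion for that is $(1+Np_2+\alpha)/p_2<(1+Np_1+\alpha)/p_1$, which, upon subtracting $N$ from both sides, collapses to the same failed inequality $(1+\alpha)/p_2<(1+\alpha)/p_1$.

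The properness argument has the same blind spot: the interval $\bigl((n+1+\alpha)/p_2,\,(n+1+\alpha)/p_1\bigr)$ is nonempty only when $n+1+\alpha>0$, so for $\alpha\le-(n+1)$ you have no separating exponent~$s$. In summary, for $\alpha>-1$ your write-up is complete (and the H\"older/normal-families direct argument you sketch is a welcome addition beyond what the paper provides), but for $\alpha\le-1$ the literal appeal to Theorem~C(b) does not go through. Since the paper's own ``proof'' is nothing more than the one-line citation of Theorem~C, it is no more explicit than yours on this point; the tension really lives in the precise formulation of part~(b) of Theorem~C rather than in your strategy.
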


This lemma, as well as the next one, is also easy to prove directly and is a good
exercise for student readers.

\begin{lem}
Fix $p>0$ and suppose $-\infty<\alpha<\beta<\infty$. Then
$A^p_\alpha\subset A^p_\beta$, and the inclusion is proper and compact.
\label{2}
\end{lem}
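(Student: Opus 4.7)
The plan is to reduce both quasi-norms to a common integral and then read off everything from the pointwise estimate $(1-|z|^2)^{\beta-\alpha}\le 1$ on $\bn$. Because $\beta>\alpha$, any non-negative integer $N$ with $Np+\alpha>-1$ automatically satisfies $Np+\beta>-1$. Fixing such an $N$, and absorbing into harmless multiplicative constants the normalization factors that appear when the parameter exceeds $-1$, one can represent both $\|f\|_{p,\alpha}^p$ and $\|f\|_{p,\beta}^p$ as equivalent to
$$|f(0)|^p + \int_\bn |g(z)|^p (1-|z|^2)^\mu\, dv(z), \qquad g(z)=(1-|z|^2)^N R^N f(z),$$
for $\mu=\alpha$ and $\mu=\beta$ respectively. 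The bound $(1-|z|^2)^{\beta-\alpha}\le 1$ immediately dominates the $\beta$-integral by the $\alpha$-integral, yielding $\|f\|_{p,\beta}\lesssim\|f\|_{p,\alpha}$ and hence $A^p_\alpha\subset A^p_\beta$ as a bounded inclusion.

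For compactness, I would take a bounded sequence $\{f_k\}$ in $A^p_\alpha$ with $f_k\to 0$ uniformly on compact subsets of $\bn$. Cauchy's estimates propagate uniform convergence to derivatives, so $g_k(z)=(1-|z|^2)^N R^N f_k(z)\to 0$ uniformly on every $\{|z|\le r\}$. Splitting the $\beta$-integral at radius $r$, the interior piece vanishes as $k\to\infty$ for each fixed $r$, while on the annulus $\{|z|>r\}$ the refined estimate $(1-|z|^2)^{\beta-\alpha}\le (1-r^2)^{\beta-\alpha}$ gives a tail controlled by $(1-r^2)^{\beta-\alpha}\|f_k\|_{p,\alpha}^p$. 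Since $\{f_k\}$ is bounded in $A^p_\alpha$, choosing $r$ close to $1$ first and then $k$ large drives $\|f_k\|_{p,\beta}$ to $0$.

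For properness, I would exhibit a function in $A^p_\beta\setminus A^p_\alpha$ using the standard test family $f_t(z)=(1-z_1)^{-t}$. The Forelli--Rudin-type integral estimates on $\bn$ (see \cite{ZZ}) yield the membership criterion $f_t\in A^p_\gamma\iff t<(n+1+\gamma)/p$, obtained directly when $\gamma>-1$ and via $R^N f_t$ together with the same integral estimate when $\gamma\le-1$; any $t\in[(n+1+\alpha)/p,\,(n+1+\beta)/p)$ then separates the two spaces. The only technical nuisance I anticipate is the bookkeeping across the thresholds $\alpha=-1$ and $\beta=-1$, where the normalization convention for $dv_\mu$ switches; this produces only harmless constants and does not alter the argument. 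The whole proof rides on the elementary bound $(1-|z|^2)^{\beta-\alpha}\le 1$ and its refinement on annuli, so I do not expect any substantive obstacle.
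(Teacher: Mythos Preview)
Your argument is correct in all three parts: the inclusion via the pointwise bound $(1-|z|^2)^{\beta-\alpha}\le 1$ applied to a common derivative order $N$, the compactness via the inner/outer split with the annular refinement $(1-|z|^2)^{\beta-\alpha}\le(1-r^2)^{\beta-\alpha}$, and the properness via the test family $f_t(z)=(1-z_1)^{-t}$ together with the Forelli--Rudin estimates. The only minor bookkeeping point is that the ``official'' norms on $A^p_\alpha$ and $A^p_\beta$ may use different smallest integers $N$, but you handle this correctly by invoking the $N$-independence of the space (with equivalent norms) from \cite{ZZ}.

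As for comparison with the paper: the paper does not actually prove this lemma. It records Lemma~\ref{2} as a special case of the previously known Theorem~C (taking $p=q$ there) and remarks that a direct proof ``is also easy to prove directly and is a good exercise for student readers.'' Your proposal is precisely such a direct proof, so there is no conflict---you have supplied what the paper leaves to the reader rather than reproduced an existing argument.
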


Functions in each Hardy space and each weighted Bergman space have certain
maximal growth rates. This information, which we state as the following lemma,
is often very useful when we try to determine if one space is contained in another.

\begin{lem}
Suppose $p>0$ and $\alpha>-(n+1)$.
\begin{itemize}
\item[(a)] If $f\in H^p$, then
$$|f(z)|\le\frac{\|f\|_{H^p}}{(1-|z|^2)^{n/p}}$$
for all $z\in\bn$, and the exponent $n/p$ above is best possible.
\item[(b)] If $f\in A^p_\alpha$, then
$$|f(z)|\le\frac {C\|f\|_{A^p_\alpha}}{(1-|z|^2)^{(n+1+\alpha)/p}}$$
for all $z\in\bn$, where the constant $C$ is independent of $f$ and can be taken
to be $1$ when $\alpha>-1$. Moreover, the exponent $(n+1+\alpha)/p$ is best possible.
\end{itemize}
\label{3}
\end{lem}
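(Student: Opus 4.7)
The plan is to obtain the pointwise estimates in (a) and the $\alpha>-1$ case of (b) by a single M\"obius-invariance argument, and to reduce the $\alpha\le-1$ case of (b) to the $\alpha>-1$ case via the differential definition of $A^p_\alpha$. Sharpness of the exponents in all cases will follow from testing against the kernel-type family $f_w(z)=(1-\langle z,w\rangle)^{-c}$.

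For the growth bound in (a), fix $z\in\bn$ and let $\varphi_z$ be the involutive automorphism of $\bn$ exchanging $0$ and $z$. Set
$$g(w)=f(\varphi_z(w))\left(\frac{1-|z|^2}{(1-\langle w,z\rangle)^2}\right)^{n/p},$$
where the outer exponent is interpreted via a single-valued holomorphic branch (since $1-\langle w,z\rangle$ is zero-free on $\bn$). Using the surface identity $d\sigma(\varphi_z(\zeta))=[(1-|z|^2)/|1-\langle\zeta,z\rangle|^2]^n\,d\sigma(\zeta)$, one checks that $\|g\|_{H^p}=\|f\|_{H^p}$. Since $|g|^p$ is plurisubharmonic, the sub-mean-value inequality over spheres of radius $r\to1^-$ gives $|g(0)|^p\le\|g\|_{H^p}^p$, and reading off $g(0)=(1-|z|^2)^{n/p}f(z)$ produces the desired bound with constant $1$. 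The $\alpha>-1$ case of (b) is entirely parallel: replace $n/p$ by $(n+1+\alpha)/p$, use the real Jacobian $|\det_\R\varphi_z'(w)|=[(1-|z|^2)/|1-\langle w,z\rangle|^2]^{n+1}$ together with $1-|\varphi_z(w)|^2=(1-|z|^2)(1-|w|^2)/|1-\langle w,z\rangle|^2$ to show $\|g\|_{A^p_\alpha}=\|f\|_{A^p_\alpha}$, and invoke the sub-mean-value using that $dv_\alpha$ is a probability measure.

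When $-n-1<\alpha\le-1$, pick the smallest positive integer $N$ with $Np+\alpha>-1$. By definition $R^Nf\in A^p_{Np+\alpha}$, a standard weighted Bergman space, and the previous step yields
$$|R^Nf(z)|\le\frac{C\|f\|_{A^p_\alpha}}{(1-|z|^2)^{N+(n+1+\alpha)/p}}.$$
To pass from this to an estimate on $f$, I plan to invoke the reproducing formula from \cite{ZZ} expressing $f$ as a linear combination of the Taylor coefficients $R^kf(0)$ for $k<N$ plus an integral of the form $\int_\bn(1-|w|^2)^sR^Nf(w)(1-\langle z,w\rangle)^{-n-1-s}\,dv(w)$ for $s$ chosen large, and then bound it via H\"older's inequality and the classical asymptotic $\int_\bn(1-|w|^2)^a|1-\langle z,w\rangle|^{-b}\,dv(w)\asymp(1-|z|^2)^{n+1+a-b}$ (valid when $b-a>n+1$ and $a>-1$). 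Tracking the exponents should produce the correct power $(1-|z|^2)^{-(n+1+\alpha)/p}$; the constant $C$ is generally not $1$, matching the lemma's weaker claim in this range.

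For the sharpness, test the estimates on $f_w(z)=(1-\langle z,w\rangle)^{-c}$ with $c$ large depending on $p$ and (in (b)) on $\alpha$. The standard asymptotics $\int_\sn|1-\langle r\zeta,w\rangle|^{-s}\,d\sigma(\zeta)\asymp(1-r^2|w|^2)^{n-s}$ for $s>n$ and $\int_\bn(1-|u|^2)^\beta|1-\langle u,w\rangle|^{-t}\,dv(u)\asymp(1-|w|^2)^{n+1+\beta-t}$ for $t>n+1+\beta$, $\beta>-1$, give $\|f_w\|_{H^p}\asymp(1-|w|^2)^{n/p-c}$ and $\|f_w\|_{A^p_\alpha}\asymp(1-|w|^2)^{(n+1+\alpha)/p-c}$, whereas $|f_w(w)|=(1-|w|^2)^{-c}$; comparing and letting $|w|\to1^-$ rules out any smaller exponent. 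The main obstacle I anticipate is step~3, specifically choosing and rigorously justifying a reproducing formula for $f$ in terms of $R^Nf$ that remains valid when $\alpha\le-1$, and bookkeeping the low-order Taylor terms $R^kf(0)$.
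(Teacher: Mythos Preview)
The paper does not actually prove this lemma; its entire proof reads ``This is well known. See \cite{Zhu1, ZZ}.'' Your proposal supplies precisely the standard argument one finds in those references, and it is correct. The M\"obius change-of-variables argument for (a) and for the $\alpha>-1$ case of (b) is exactly the textbook proof and yields the sharp constant $1$ as claimed; the sharpness test with the kernel family $f_w(z)=(1-\langle z,w\rangle)^{-c}$ is likewise the standard device and works as you describe, including for $\alpha\le-1$ once one notes that $R^Nf_w$ is again a kernel of the same type up to a bounded holomorphic factor.

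For the $-(n+1)<\alpha\le-1$ range, your reduction is the right idea and can be completed, but the reproducing formula you sketch is more machinery than necessary. A cleaner route is to iterate the elementary radial identity
\[
f(z)-f(0)=\int_0^1\frac{(Rf)(tz)}{t}\,dt,
\]
which after $N$ applications expresses $f(z)$ (modulo a polynomial of degree $<N$ controlled by the $|R^kf(0)|$, hence by $\|f\|_{A^p_\alpha}$) as an iterated integral of $R^Nf$; inserting your bound $|R^Nf(z)|\le C\|f\|_{A^p_\alpha}(1-|z|^2)^{-N-(n+1+\alpha)/p}$ and integrating out the $t$-variables produces the exponent $(n+1+\alpha)/p$ directly, since $n+1+\alpha>0$. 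Alternatively, and this is closer to what \cite{ZZ} does, one uses the invertible fractional operators $R_{s,t}$ there, which identify $A^p_\alpha$ isomorphically with a standard weighted Bergman space and immediately transport the pointwise estimate. Either way there is no genuine obstacle; your anticipated difficulty is only bookkeeping.
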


\begin{proof}
This is well known. See \cite{Zhu1, ZZ}.
\end{proof}

Finally in this section we include an integral estimate that is frequently used in the
theory of Bergman spaces. We will also need it later in this paper.

\begin{lem}
Suppose $\alpha>-1$ and $t\in\R$. Then
$$\inb\frac{(1-|w|^2)^\alpha}{|1-\langle z,w\rangle|^{n+1+\alpha+t}}\,dv(w)
\sim\begin{cases}\displaystyle\frac1{(1-|z|^2)^t}, & t>0\\[8pt] 1, & t<0\end{cases}$$
for $z\in\bn$.
\label{4}
\end{lem}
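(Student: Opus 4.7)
The plan is to reduce the ball integral to a one-dimensional one via polar coordinates and a well-known sphere integral estimate, then carry out classical one-variable asymptotics.

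Write $c=n+1+\alpha+t$. Since $(1-|w|^2)$ and $|1-\langle z,w\rangle|$ are invariant under $(z,w)\mapsto(Uz,Uw)$ for any unitary $U$, I may first assume $z=re_1$ with $r=|z|$. Writing $w=s\zeta$ in polar coordinates with $s\in[0,1)$ and $\zeta\in\sn$, the integral becomes
$$2n\int_0^1 s^{2n-1}(1-s^2)^\alpha\left[\ins\frac{d\sigma(\zeta)}{|1-s\langle z,\zeta\rangle|^c}\right]ds.$$
At this point I would invoke the companion sphere estimate (see, e.g., \cite{R} or the corresponding statement in \cite{ZZ}): for $c>0$ and $a\in\bn$,
$$\ins\frac{d\sigma(\zeta)}{|1-\langle a,\zeta\rangle|^c}\sim\begin{cases}(1-|a|^2)^{-(c-n)}, & c>n,\\ 1, & c<n,\end{cases}$$
applied with $a=sz$, so that $|a|^2=s^2r^2$.

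The problem then reduces to two one-variable cases. For $t>0$: since $\alpha>-1$ we have $c-n=1+\alpha+t>0$, so the sphere integral is $\sim(1-s^2r^2)^{-(1+\alpha+t)}$ and the task is to show
$$\int_0^1\frac{s^{2n-1}(1-s^2)^\alpha}{(1-s^2r^2)^{1+\alpha+t}}\,ds\sim\frac{1}{(1-r^2)^t},$$
which is standard via a splitting of the interval at $s^2=1-(1-r^2)$ (or the substitution $u=(1-s^2)/(1-s^2r^2)$), with a matching lower bound obtained by restricting integration to $s\in[r,1)$. For $t<0$: if $c<n$ the sphere integral is uniformly bounded, and the outer integral collapses to a harmless constant multiple of $\int_0^1 s^{2n-1}(1-s^2)^\alpha\,ds$, uniformly bounded above and below in $r$; if $c>n$ the integrand is dominated (monotonely as $r\uparrow1$) by $s^{2n-1}(1-s^2)^{-1-t}$, integrable precisely because $-1-t>-1$, with a positive lower bound coming from the inequality $(1-s^2r^2)^{-(1+\alpha+t)}\ge 1$. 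The borderline $c=n$ produces a logarithmic factor that is handled the same way.

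The principal technical obstacle is the sphere estimate itself, which is classical (proved via slice integration reducing to a disk integral and a Taylor expansion argument) but nontrivial; I would cite it from \cite{ZZ} rather than reprove it. Granting that input, everything else is routine one-variable asymptotic calculation, so the substance of the lemma is entirely concentrated in the sphere estimate plus the beta-type 1D integral used in the $t>0$ case.
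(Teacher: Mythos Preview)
The paper does not actually prove this lemma: its entire proof reads ``This is well known. See \cite{R, ZZ, Zhu1} for example.'' Your sketch is correct and is precisely the standard argument given in those references (polar coordinates, then the Forelli--Rudin sphere estimate, then a one-variable beta-type integral). So in effect you have supplied more detail than the paper itself, and what you wrote matches the proofs in the cited sources.
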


\begin{proof}
This is well known. See \cite{R, ZZ, Zhu1} for example.
\end{proof}

\section{The proof of Theorem A}

Taking $q_1=0$, $q_2>0$, and $s_1=0$ in Theorem 5.13 in \cite{BB} gives the following:
if $0<p_1<p_2<\infty$, then $H^{p_1}\subset A^{p_2}_{(np_2)/p_1-(n+1)}$. Using our
notation here we can restate this result as follows.

\begin{lem}
If $0<p<q<\infty$, then $H^p\subset A^q_\alpha$ for
$$\alpha=\frac{nq}p-(n+1)\qquad{\rm i.e.}\qquad\frac np=\frac{n+1+\alpha}q.$$
\label{5}
\end{lem}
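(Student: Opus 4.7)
Since $0<p<q$, the exponent $\alpha = nq/p-(n+1)$ satisfies $\alpha>-1$, so $dv_\alpha$ is finite and $A^q_\alpha$ is a standard weighted Bergman space. The condition $\frac{n+1+\alpha}{q} = \frac{n}{p}$ is the \emph{critical} endpoint: combining the definition of the Hardy norm with the pointwise bound $|f(r\zeta)|^{q-p} \leq \|f\|_{H^p}^{q-p}(1-r^2)^{-n(q-p)/p}$ from Lemma~\ref{3} yields, in polar coordinates, $\int_\bn|f|^q\,dv_\alpha \leq C\|f\|_{H^p}^q\int_0^1(1-r^2)^{\alpha - n(q-p)/p}r^{2n-1}\,dr$, but at $\alpha = nq/p-(n+1)$ the radial exponent becomes $-1$ and this integral diverges logarithmically. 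A finer argument is therefore needed at the endpoint.

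My plan is to prove the embedding via the admissible maximal function on $H^p$ together with a weighted Carleson-box volume estimate; this amounts to saying that $dv_\alpha$ is a Carleson measure of order $q/p$ on $\bn$. For $\zeta\in\sn$ fix the admissible approach region $\Gamma(\zeta) = \{z\in\bn : |1-\langle z,\zeta\rangle| < c(1-|z|^2)\}$ (some fixed $c>1$) and the admissible maximal function $\mathcal{M}f(\zeta) = \sup_{z\in\Gamma(\zeta)}|f(z)|$. The classical theory of $H^p$ on the ball (see \cite{R}) gives $\|\mathcal{M}f\|_{L^p(\sn)} \leq C\|f\|_{H^p}$ for every $0<p<\infty$. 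The key geometric input is that for each open $E\subset\sn$, the tent $T(E) = \bigcup_{\zeta\in E}\Gamma(\zeta)$ satisfies
\[
v_\alpha(T(E)) \leq C\,\sigma(E)^{(n+1+\alpha)/n},
\]
and our choice of $\alpha$ makes this exponent exactly $q/p$. Since $|f(z)|>\lambda$ forces $\mathcal{M}f(\zeta)>\lambda$ for every $\zeta$ with $z\in\Gamma(\zeta)$, the super-level set $\{|f|>\lambda\}$ lies inside $T(E_\lambda)$ where $E_\lambda = \{\mathcal{M}f>\lambda\}$, so $v_\alpha(\{|f|>\lambda\}) \leq C\,\sigma(E_\lambda)^{q/p}$. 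Cavalieri's principle and the Chebyshev bound $\sigma(E_\lambda)^{q/p-1} \leq \lambda^{p-q}\|\mathcal{M}f\|_{L^p}^{q-p}$ (legitimate because $q/p\geq 1$) then collapse the estimate to
\[
\int_{\bn}|f|^q\,dv_\alpha \leq C'\|\mathcal{M}f\|_{L^p}^{q-p}\int_0^\infty \lambda^{p-1}\sigma(E_\lambda)\,d\lambda = C''\|\mathcal{M}f\|_{L^p(\sn)}^q \leq C'''\|f\|_{H^p}^q.
\]

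The hardest step is the weighted tent estimate. I would verify it first on a single non-isotropic ball $B(\zeta_0,\delta)\subset\sn$, where the associated Carleson box $Q(\zeta_0,\delta) = \{z\in\bn : 1-|z|^2<\delta,\ |1-\langle z/|z|,\zeta_0\rangle|<\delta\}$ has weighted volume $\sim \delta^{n+1+\alpha} \sim \sigma(B(\zeta_0,\delta))^{(n+1+\alpha)/n}$ by a direct polar-coordinate computation. A Whitney-type decomposition of an arbitrary open $E\subset\sn$ into non-isotropic balls with bounded overlap, combined with the superadditivity $\sum_j a_j^\beta \leq \bigl(\sum_j a_j\bigr)^\beta$ for $\beta = (n+1+\alpha)/n\geq 1$, then promotes the single-ball estimate to the full tent estimate and closes the argument.
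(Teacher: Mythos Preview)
The paper does not prove this lemma at all; it simply quotes it as a specialisation of Theorem~5.13 in Beatrous--Burbea \cite{BB}. Your approach via the admissible maximal function and a Carleson-measure/level-set argument is a genuinely different and more self-contained route, essentially the H\"ormander--Duren proof that a measure $\mu$ satisfying $\mu(Q(\zeta,\delta))\lesssim\delta^{nq/p}$ embeds $H^p$ into $L^q(\mu)$.

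There is, however, a real slip in your tent estimate. With your definition $T(E)=\bigcup_{\zeta\in E}\Gamma(\zeta)$, the inequality $v_\alpha(T(E))\le C\,\sigma(E)^{(n+1+\alpha)/n}$ is \emph{false}: take $E=B(\zeta_0,\varepsilon)$; then $T(E)\supset\Gamma(\zeta_0)$, whose $v_\alpha$-measure is a fixed positive constant (Lemma~\ref{15} with $t=-(n+1+\alpha)<0$), while $\sigma(E)^{(n+1+\alpha)/n}\sim\varepsilon^{n+1+\alpha}\to0$. The Whitney step cannot rescue this, because each $T(B_j)$ reaches all the way to the centre of the ball and is not contained in any Carleson box of size $r_j$.

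The fix is standard: use the \emph{other} tent
\[
\widehat T(E)=\{z\in\bn: I(z)\subset E\},\qquad I(z)=\{\zeta\in\sn: z\in\Gamma(\zeta)\}.
\]
Your own observation that $|f(z)|>\lambda$ forces $\mathcal Mf(\zeta)>\lambda$ for \emph{every} $\zeta$ with $z\in\Gamma(\zeta)$ already gives the stronger containment $\{|f|>\lambda\}\subset\widehat T(E_\lambda)$. For this tent the Whitney argument works: if $I(z)\subset E$ then $z/|z|$ lies at non-isotropic distance $\gtrsim 1-|z|^2$ from $\sn\setminus E$, so the Whitney ball $B_j$ containing $z/|z|$ has $r_j\gtrsim 1-|z|^2$ and $z$ sits in a fixed dilate of the Carleson box over $B_j$. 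Then $v_\alpha(\widehat T(E))\le\sum_j v_\alpha(Q_j^\ast)\lesssim\sum_j\sigma(B_j)^{(n+1+\alpha)/n}\le\bigl(\sum_j\sigma(B_j)\bigr)^{(n+1+\alpha)/n}\lesssim\sigma(E)^{(n+1+\alpha)/n}$, and the rest of your Cavalieri/Chebyshev computation goes through unchanged.
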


We can now prove part (b) of Theorem A.

\begin{thm}
If $0<p<q<\infty$ and $\alpha\in\R$, then $H^p\subset A^q_\alpha$ if and only if
$$\frac np\le\frac{n+1+\alpha}q.$$
\label{6}
\end{thm}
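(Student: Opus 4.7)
The plan is to treat sufficiency and necessity separately.

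For sufficiency, assume $n/p\le(n+1+\alpha)/q$ and set $\beta_0=nq/p-(n+1)$, so that the hypothesis reads $\beta_0\le\alpha$ (and $\beta_0>-1$ automatically since $q>p$). Lemma~\ref{5} gives $H^p\subset A^q_{\beta_0}$, and Lemma~\ref{2} gives $A^q_{\beta_0}\subset A^q_\alpha$ (trivially if $\beta_0=\alpha$). Chaining these inclusions yields $H^p\subset A^q_\alpha$.

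For necessity, suppose $H^p\subset A^q_\alpha$. Both spaces are complete metric linear spaces, so the closed graph theorem furnishes a constant $C>0$ with $\|f\|_{A^q_\alpha}\le C\|f\|_{H^p}$ for every $f\in H^p$. First I reduce to the case $\alpha>-1$: if $\alpha\le-1$, then Lemma~\ref{2} gives $H^p\subset A^q_{\alpha'}$ for every $\alpha'>-1$, so once the necessity is established in that range it delivers $n/p\le(n+1+\alpha')/q$ for all such $\alpha'$; letting $\alpha'\to-1^+$ yields $n/p\le n/q$, contradicting $p<q$. Thus $\alpha>-1$ is forced.

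For $\alpha>-1$, I test against $f_w(z)=(1-\langle z,w\rangle)^{-s}$ with $s$ chosen larger than both $n/p$ and $(n+1+\alpha)/q$. The sphere analogue of Lemma~\ref{4} and Lemma~\ref{4} itself yield, as $|w|\to1^-$,
$$\|f_w\|_{H^p}\sim(1-|w|^2)^{n/p-s},\qquad\|f_w\|_{A^q_\alpha}\sim(1-|w|^2)^{(n+1+\alpha)/q-s}.$$
Feeding these into the closed-graph bound leaves $(1-|w|^2)^{(n+1+\alpha)/q-n/p}$ uniformly bounded in $w\in\bn$, and letting $|w|\to1^-$ forces that exponent to be nonnegative, giving $n/p\le(n+1+\alpha)/q$.

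The main obstacle is establishing the matching lower bounds in the two asymptotics above. The upper bounds are immediate from Lemma~\ref{4} and its sphere analogue, but the lower bounds require localizing the defining integrals to a Carleson tent anchored at $w/|w|$, where $|1-\langle z,w\rangle|\sim 1-|w|^2$. These localizations are routine but are what makes the test family sharp, reflecting the sharpness of the growth exponents stated in Lemma~\ref{3}.
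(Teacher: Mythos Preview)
Your argument is correct and tracks the paper's proof closely: sufficiency is identical (Lemma~\ref{5} chained with Lemma~\ref{2}), and for necessity the paper simply invokes the optimality clause of Lemma~\ref{3}, which is precisely what your explicit test-function computation with $f_w$ and Lemma~\ref{4} reproves. Your reduction to $\alpha>-1$ (via a limiting contradiction) differs cosmetically from the paper's reduction to $\alpha>-(n+1)$ (via the logarithmic-growth remark for $A^q_\alpha$), and note that since Lemma~\ref{4} already gives two-sided asymptotics, the lower bounds you flag as an obstacle come for free.
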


\begin{proof}
To prove the ``if part", we choose $\beta\le\alpha$ such that
$$\frac np=\frac{n+1+\beta}q\le\frac{n+1+\alpha}q.$$
It then follows from Lemmas \ref{5} and \ref{2} that $H^p\subset A^q_\beta\subset
A^q_\alpha$.

To prove the ``only if part", we assume that $H^p\subset A^q_\alpha$. It is easy to see
that we must have $n+1+\alpha>0$; otherwise, functions in $A^q_\alpha$ can grow at
most logarithmically (see Theorem~22 of \cite{ZZ}), while functions in $H^p$ can grow
faster near the boundary. Under this extra assumption, Lemma~\ref{3} says that
$f\in H^p\subset A^q_\alpha$ must satisfy the pointwise estimate
$$|f(z)|\le\frac C{(1-|z|^2)^{(n+1+\alpha)/q}},\qquad z\in\bn;$$
and the {\em optimal} pointwise estimate for functions $f$ in $H^p$ is given by
$$|f(z)|\le\frac C{(1-|z|^2)^{n/p}},\qquad z\in\bn.$$
Thus we must have
$$\frac np\le\frac{n+1+\alpha}q.$$
This completes the proof of the theorem.
\end{proof}

Setting $q_2=s_2=0$ and $q_1>0$ in Theorem 5.13 in \cite{BB}, we obtain the
following: if $0<p_1<p_2<\infty$, then $A^{p_1}_{q_1-p_1s_1-1}\subset H^{p_2}$ for
$$\frac{n+q_1}{p_1}-\frac n{p_2}=s_1.$$
Again, we will restate this as follows using our notation.

\begin{lem}\label{7}
If $0<p<q<\infty$, then $A^p_\alpha\subset H^q$ for
$$\alpha=\frac{np}q-(n+1)\qquad{\rm i.e.}\qquad \frac{n+1+\alpha}p=\frac nq.$$
\end{lem}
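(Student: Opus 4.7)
Since $p<q$ forces $\alpha=np/q-(n+1)<-1$, I will use the radial-derivative characterization of $A^p_\alpha$: pick $N\in\mathbb{Z}_+$ with $Np+\alpha>-1$, so that $f\in A^p_\alpha$ is equivalent to $(1-|w|^2)^N R^Nf(w)\in L^p(\bn,dv_\alpha)$. My plan is first to represent $f$ by a standard reproducing formula
$$f(z)=P_{<N}(z)+C\int_{\bn}\frac{R^Nf(w)\,(1-|w|^2)^{N+\alpha}}{(1-\langle z,w\rangle)^{n+1+N+\alpha}}\,dv(w),$$
in which $P_{<N}$ is a polynomial of degree $<N$ carrying the jet of $f$ at the origin (trivially in $H^q$), and then to estimate the Hardy $q$-norm of the integral term.

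For $p>1$ I would bound $|f(r\zeta)|$ via H\"older's inequality applied to the above integral, splitting the kernel $|1-\langle r\zeta,w\rangle|^{-(n+1+N+\alpha)}$ as a product of two factors, tuned so that one factor pairs with $|R^Nf(w)|^p(1-|w|^2)^{Np+\alpha}\,dv(w)$, while the other, raised to the $q$-th power and integrated in $\zeta$ via the sphere analog of Lemma~\ref{4}, produces a cancelling weight $(1-|w|^2)^{-s}$. Fubini then reassembles the pieces, and the critical relation $\frac{n+1+\alpha}{p}=\frac{n}{q}$ is precisely the algebraic bookkeeping that forces the two $(1-|w|^2)$-exponents to collapse, leaving an integral equal to a constant times $\|f\|_{A^p_\alpha}^q$; taking $\sup_r$ completes the bound.

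For $0<p\le 1$ the H\"older step is unavailable, and my backup plan is the atomic decomposition of $A^p_\alpha$: writing $f=\sum_k c_k\widetilde K_\beta(\cdot,w_k)$ with normalized kernel atoms and $\sum_k|c_k|^p\lesssim\|f\|^p_{A^p_\alpha}$, direct evaluation via the sphere analog of Lemma~\ref{4} shows that the same normalization $(1-|w_k|^2)^{(n+1+\alpha)/p-(n+1+\beta)}$ makes each atom of unit $H^q$-size as well, again thanks to $\frac{n+1+\alpha}{p}=\frac{n}{q}$. Assembling via $q$-subadditivity (when $q\le 1$) or the triangle inequality combined with $\ell^p\subset\ell^1$ (when $q>1$, noting $p\le 1<q$) completes the case. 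The main obstacle is the exponent bookkeeping in the H\"older split: the two kernel exponents must be tuned so that each of the resulting integrals converges \emph{and} the two $(1-|w|^2)$-factors exactly cancel, and this cancellation succeeds only because of the borderline hypothesis. That cancellation is therefore the heart of the argument, with everything else being standard weighted-Bergman-space technique.
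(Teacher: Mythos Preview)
The paper does not prove this lemma directly: the text immediately preceding it explains that the statement is obtained by specializing Theorem~5.13 of Beatrous--Burbea \cite{BB} (taking $q_2=s_2=0$, $q_1>0$), and Lemma~\ref{7} is simply the restatement in the paper's notation. So there is no argument in the paper to compare against beyond that citation.

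Your proposal is therefore a genuinely different, self-contained attack. The atomic-decomposition half for $0<p\le1$ is essentially correct: with the standard $A^p_\alpha$ atomic decomposition, each normalized atom has $H^q$-norm $\sim1$ precisely because $(n+1+\alpha)/p=n/q$, and then $\ell^p\subset\ell^q$ (if $q\le1$) or $\ell^p\subset\ell^1$ (if $p\le1<q$) finishes it. This already gives an independent proof for small $p$.

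The H\"older half for $p>1$, however, has a real obstruction as you have sketched it. If one factor is to carry exactly $|R^Nf(w)|^p(1-|w|^2)^{Np+\alpha}$, the complementary factor is forced to carry the weight $(1-|w|^2)^\alpha$; since $\alpha<-1$ here, neither Lemma~\ref{4} nor its sphere analogue applies, and that integral diverges. Trying to shift exponents or insert a Minkowski step leads either to the requirement $N<0$ or to an uncompensated factor $(1-r^2)^{-t}$. (Also, your reproducing formula uses weight exponent $N+\alpha$, which can itself be $\le-1$ when $p>1$, so even the integral representation needs a larger auxiliary parameter.) The ``exponent bookkeeping'' you identify as the crux really does fail for the naive Schur-type split; Beatrous--Burbea circumvent exactly this by working with fractional integration operators and their Hardy-space mapping properties. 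A workable direct route for $p>1$ would be to reproduce with a weight $\beta>Np+\alpha$ and then invoke those $H^p\!\to\!H^q$ mapping results for the fractional integral, or to interpolate between your $p\le1$ atomic endpoint and a trivial $L^\infty$ endpoint.
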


We can now prove part (a) of Theorem A.

\begin{thm}\label{8}
If $0<p<q<\infty$ and $\alpha\in\R$, then $A^p_\alpha\subset H^q$ if and only if
$$\frac{n+1+\alpha}p\le\frac nq.$$
\end{thm}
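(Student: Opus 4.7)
The plan is to mirror the proof of Theorem \ref{6} essentially line by line, with Lemma \ref{7} playing the role that Lemma \ref{5} played there. Lemma \ref{7} anchors the critical case $\frac{n+1+\beta}{p}=\frac{n}{q}$, and Lemma \ref{2} together with the pointwise growth estimates of Lemma \ref{3} will handle the inequality in both directions.

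For the ``if'' direction, assume $\frac{n+1+\alpha}{p}\le\frac{n}{q}$ and set $\beta=np/q-(n+1)$. Since $0<p<q$, this $\beta$ lies in $(-(n+1),-1)$ and satisfies $\beta\ge\alpha$ with equality in the critical relation $\frac{n+1+\beta}{p}=\frac{n}{q}$. Lemma \ref{2} (or trivially when $\alpha=\beta$) gives $A^p_\alpha\subset A^p_\beta$, and Lemma \ref{7} gives $A^p_\beta\subset H^q$, so composing yields $A^p_\alpha\subset H^q$.

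For the ``only if'' direction, assume $A^p_\alpha\subset H^q$. The closed-graph theorem produces a constant $C>0$ with $\|f\|_{H^q}\le C\|f\|_{A^p_\alpha}$ for every $f\in A^p_\alpha$. If $\alpha\le-(n+1)$, then $\frac{n+1+\alpha}{p}\le 0<\frac{n}{q}$ automatically, and we are done. Otherwise $\alpha>-(n+1)$, and applying the Hardy-space pointwise bound in Lemma \ref{3}(a) to any $f\in A^p_\alpha\subset H^q$ gives
$$|f(z)|\le\frac{\|f\|_{H^q}}{(1-|z|^2)^{n/q}}\le\frac{C\|f\|_{A^p_\alpha}}{(1-|z|^2)^{n/q}},\qquad z\in\bn.$$
Since Lemma \ref{3}(b) asserts that $(n+1+\alpha)/p$ is the \emph{best possible} exponent in such an estimate for $A^p_\alpha$, we must have $\frac{n+1+\alpha}{p}\le\frac{n}{q}$.

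Because each step is a direct transcription of the corresponding step in Theorem \ref{6}, I do not expect a genuine obstacle. The only mildly delicate point is the case $\alpha\le-(n+1)$ in the ``only if'' direction, where Lemma \ref{3}(b) no longer applies and a naive pointwise-comparison argument would break down; fortunately the target inequality is automatic there, so the proof is complete without any further input.
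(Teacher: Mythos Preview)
Your proof is correct and follows essentially the same approach as the paper: both the ``if'' direction (via Lemma~\ref{2} and Lemma~\ref{7} with the critical $\beta=np/q-(n+1)$) and the ``only if'' direction (via the optimal pointwise estimates of Lemma~\ref{3}, after disposing of the degenerate case $\alpha\le-(n+1)$) match the paper's argument line by line.
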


\begin{proof}
To prove the ``if part", we choose some $\beta\ge\alpha$ such that
$$\frac{n+1+\alpha}p\le\frac{n+1+\beta}p=\frac nq.$$
It follows from Lemmas \ref{2} and \ref{7} that
$A^p_\alpha\subset A^p_\beta\subset H^q$.

To prove the ``only if part", let us assume that $A^p_\alpha\subset H^q$. Then,
by Lemma~\ref{3}, every function $f\in A^p_\alpha\subset H^q$ satisfies the
pointwise estimate
$$|f(z)|\le\frac C{(1-|z|^2)^{n/q}},\qquad z\in\bn.$$
Without loss of generality, we may assume that
$n+1+\alpha>0$; otherwise, the desired estimate for the parameters is trivial. Under
this extra assumption, Lemma~\ref{3} says that the {\em optimal} pointwise estimate
for functions $f\in A^p_\alpha$ is given by
$$|f(z)|\le\frac C{(1-|z|^2)^{(n+1+\alpha)/p}},\qquad z\in\bn.$$
Thus we must have
$$\frac{n+1+\alpha}p\le\frac nq.$$
This completes the proof of the theorem.
\end{proof}

The following result, together with Theorems~\ref{6} and \ref{8}, will complete
the proof of Theorem A.

\begin{thm}\label{9}
Suppose $0<p<q<\infty$ and $\alpha\in\R$. Then
\begin{enumerate}
\item[(a)] $H^p$ is compactly contained in $A^q_\alpha$ if and only if
$$\frac np<\frac{n+1+\alpha}q.$$
\item[(b)] $A^p_\alpha$ is compactly contained in $H^q$ if and only if
$$\frac{n+1+\alpha}p<\frac nq.$$
\end{enumerate}
\end{thm}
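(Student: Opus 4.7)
My plan is to address the two inclusion directions separately in each of parts (a) and (b). The ``if'' directions I would prove by factoring through an intermediate weighted Bergman space, combining the critical-case (equality) embeddings from Lemmas~\ref{5} and \ref{7} with the compact embedding of Lemma~\ref{2}. The ``only if'' directions I would prove by testing against the normalized kernel-type family
$$g_a(z)=(1-|a|^2)^{s}(1-\langle z,a\rangle)^{-t},\qquad a\in\bn,$$
with $s,t$ chosen so that the source norm is comparable to a positive constant while $g_a\to0$ uniformly on compact subsets of $\bn$ as $|a|\to1^-$.

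For the ``if'' part of (b), given strict inequality $(n+1+\alpha)/p<n/q$, I would set $\beta=np/q-(n+1)$, which satisfies $\alpha<\beta$ and $(n+1+\beta)/p=n/q$. Lemma~\ref{2} then gives a compact embedding $A^p_\alpha\subset A^p_\beta$, while Lemma~\ref{7} gives the bounded embedding $A^p_\beta\subset H^q$; composing, any sequence $\{f_n\}$ bounded in $A^p_\alpha$ with $f_n\to0$ uniformly on compacta satisfies $\|f_n\|_{A^p_\beta}\to0$, hence $\|f_n\|_{H^q}\to0$. The ``if'' part of (a) is symmetric: set $\beta=nq/p-(n+1)$, use Lemma~\ref{5} for the bounded step $H^p\subset A^q_\beta$, and Lemma~\ref{2} (now with $\beta<\alpha$) for the compact step $A^q_\beta\subset A^q_\alpha$.

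For the ``only if'' directions I would use Lemma~\ref{4} (and its well-known sphere analog) to obtain, for sufficiently large $t$, the estimates
$$\|g_a\|_{H^p}^p\sim(1-|a|^2)^{sp+n-tp},\qquad \|g_a\|_{A^q_\alpha}^q\sim(1-|a|^2)^{sq+n+1+\alpha-tq},$$
with the roles of source and target swapped for part (b). In part (a) I would take $s=t-n/p$, so that $\|g_a\|_{H^p}\sim 1$ and the exponent in $\|g_a\|_{A^q_\alpha}^q$ collapses to $n+1+\alpha-nq/p$, which vanishes precisely in the equality case of (\ref{eq1}). In part (b) I would take $s=t-(n+1+\alpha)/p$, so that $\|g_a\|_{A^p_\alpha}\sim 1$ while the exponent in $\|g_a\|_{H^q}^q$ becomes $n-(n+1+\alpha)q/p$, which vanishes precisely in the equality case of (\ref{eq2}). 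Since $s>0$ in each case, $g_a\to0$ uniformly on compact subsets as $|a|\to1^-$, and the non-vanishing of the target norm contradicts compactness of the embedding.

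The main technical obstacle is the range $\alpha\le-1$ in part (b), where $A^p_\alpha$ is a holomorphic Sobolev space with norm defined through a radial derivative $R^N$. I would expand
$$R^N\bigl[(1-\langle z,a\rangle)^{-t}\bigr]=\sum_{k=1}^{N}c_{N,k,t}\,\langle z,a\rangle^{k}(1-\langle z,a\rangle)^{-t-k},$$
observe that the leading $k=N$ term dominates near $z=a$ and scales like $(1-|a|^2)^{s}|1-\langle z,a\rangle|^{-t-N}$, and then apply Lemma~\ref{4} with the admissible weight $Np+\alpha>-1$. The resulting exponent of $(1-|a|^2)$ in $\|g_a\|_{A^p_\alpha}^p$ comes out to $sp+n+1+\alpha-tp$, matching the $\alpha>-1$ computation, so the same test functions handle the full range of $\alpha$.
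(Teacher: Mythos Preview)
Your approach is essentially the same as the paper's: factor the ``if'' direction through an intermediate $A^{\cdot}_\beta$ using Lemmas~\ref{5}/\ref{7} together with the compact step from Lemma~\ref{2}, and disprove compactness in the equality case via the normalized kernel family (the paper uses the specific choice $t=2n/p$ in part~(a), a special case of your $g_a$). Two minor remarks: your references to (\ref{eq1}) and (\ref{eq2}) are swapped, and your explicit treatment of the Sobolev regime $\alpha<-1$ in part~(b) actually supplies the details the paper omits when it writes ``Part~(b) is proved in exactly the same way.''
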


\begin{proof}
To prove part (a), we first assume that $n/p<(n+1+\alpha)/q$. Then we can find another
parameter $\beta$ such that
$$\frac np<\frac{n+1+\beta}q<\frac{n+1+\alpha}q.$$
We then have $H^p\subset A^q_\beta\subset A^q_\alpha$. Since $\beta<\alpha$,
it follows from Lemma~\ref{2} that $A^q_\beta$ is compactly contained in
$A^q_\alpha$. Thus $H^p$ is compactly contained in $A^q_\alpha$.

If $n/p>(n+1+\alpha)/q$, then $H^p$ is not contained in $A^q_\alpha$ at all.

If $n/p=(n+1+\alpha)/q$, we have $H^p\subset A^q_\alpha$. To show that this
inclusion is not compact, we consider functions of the form
$$f_a(z)=(1-|a|^2)^{n/p}/(1-\langle z,a\rangle)^{2n/p},\qquad z\in\bn,$$
where $a\in\bn$. It is clear that $\|f_a\|_{H^p}=1$ for all $a\in\bn$ and
$f_a(z)\to0$ pointwise as $|a|\to1^-$.
Note that
$$\frac np=\frac{n+1+\alpha}q<\frac{n+1+\alpha}p,$$
which yields $\alpha>-1$. Thus
\begin{align*}
\|f_a\|^q_{q,\alpha}&=\inb|f_a(z)|^q\,dv_\alpha(z)\\
&=(1-|a|^2)^{nq/p}\inb\frac{dv_\alpha(z)}{|1-\langle z,a\rangle|^{2nq/p}}\\
&=(1-|a|^2)^{n+1+\alpha}\inb\frac{dv_\alpha(z)}
{|1-\langle z,a\rangle|^{2(n+1+\alpha)}}\\
&=1
\end{align*}
for all $a\in\bn$. In particular, $\|f_a\|_{q,\alpha}\not\to0$ as $|a|\to1^-$. Thus
the inclusion $H^p\subset A^q_\alpha$ is not compact. This completes the proof of
part (a).

Part (b) is proved in exactly the same way. We omit the details.
\end{proof}

\section{The case $p=q$}

In this section we consider the containment relations between $H^p$ and $A^p_\alpha$.
As was mentioned earlier, we have $A^2_{-1}=H^2$. It turns out that there is a critical
difference between the cases $p<2$ and $p>2$.

The following lemma is related but not critical to our analysis. But it is of some
independent interest.

\begin{lem}
Let $p\ge1$ and $\alpha\in\R$. Then $A^p_\alpha$ is isomorphic (with equivalent norms)
to $H^p$ if and only if $p=2$.
\label{10}
\end{lem}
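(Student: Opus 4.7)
The ``if'' direction is immediate: when $p=2$, choosing $\alpha=-1$ and invoking the identity $A^2_{-1}=H^2$ noted in the Introduction gives the required isomorphism (with equivalent norms).

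For the ``only if'' direction, the strategy is to use $\ell^p$ as a Banach-space isomorphism invariant. Specifically, I would establish two facts: (i) for every $p\ge 1$ and every $\alpha\in\R$, the space $A^p_\alpha$ is isomorphic to $\ell^p$; (ii) the Hardy space $H^p$ is isomorphic to $\ell^p$ only when $p=2$. Combining (i) and (ii) forces $p=2$ whenever $A^p_\alpha\cong H^p$.

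To prove (i), for $\alpha>-1$ I would invoke the standard atomic decomposition of weighted Bergman spaces (see \cite{ZZ}). This yields bounded linear operators $S\colon\ell^p\to A^p_\alpha$ (summing an atomic series) and $T\colon A^p_\alpha\to\ell^p$ (reading off coefficients at a suitable $r$-lattice) with $S\circ T$ equal to the identity on $A^p_\alpha$, realizing $A^p_\alpha$ as a complemented subspace of $\ell^p$; Pelczy\'nski's theorem on complemented subspaces of $\ell^p$ ($1\le p<\infty$) then gives $A^p_\alpha\cong\ell^p$. For $\alpha\le -1$, the radial-derivative definition shows that the map $f\mapsto(f(0),(1-|z|^2)^NR^Nf)$ embeds $A^p_\alpha$ as the direct sum of $\C$ and a closed subspace of $L^p(\bn,dv_\alpha)$ that is isomorphic to $A^p_{\alpha+Np}$ with $\alpha+Np>-1$, reducing to the previous case.

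Claim (ii) is a classical and deep result in the isomorphic theory of Hardy spaces: for $p=1$ it is due to Pelczy\'nski, while for $1<p<\infty$ with $p\ne 2$ it follows from the interaction of type/cotype with the concrete structure of $H^p$; this is precisely the input that we import from \cite{W}. The main obstacle in the proof is (ii): it is a genuinely deep Banach-space-theoretic fact that must be cited as a black box. By contrast, (i) is essentially bookkeeping once the atomic decomposition of Bergman spaces is in hand, although some care is needed for $\alpha\le -1$ to carry out the reduction via differentiation cleanly.
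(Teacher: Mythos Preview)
Your strategy is essentially the paper's: both arguments pin down $A^p_\alpha\cong\ell^p$ and then use that $H^p\not\cong\ell^p$ for $p\neq2$. The paper does this purely by citation---$A^p_\alpha(\bn)\cong A^p(\bn)$ from \cite{ZZ}, $A^p(\bn)\cong\ell^p$ from \cite{W3}, and $H^p(\bn)\cong H^p(\D)\cong L^p[0,1]\not\cong\ell^p$ from \cite{W1,W2,W4,Boas,W}---whereas you sketch (i) via atomic decomposition and Pe\l czy\'nski and black-box (ii). Two small points to tighten: in the ``if'' direction $\alpha$ is \emph{given}, not chosen, so invoke instead that all separable infinite-dimensional Hilbert spaces are isomorphic (or just apply your (i) at $p=2$); and for (ii) note that $H^p$ here is on $\bn$, so the reduction to the disc (the content of \cite{W1,W2,W4}) is part of what you must import before citing the one-variable fact from \cite{W}.
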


\begin{proof}
The result is well known for the open unit disc; see \cite{W} for example.

In higher dimensions, it was shown in \cite{W1,W2,W4} that $H^p$ of the unit ball is
isomorphic to $H^p$ of the unit disc, which is also isomorphic to $L^p[0,1]$ (see
\cite{Boas}). By \cite{ZZ}, the weighted Bergman space $A^p_\alpha(\bn)$ is isomorphic
to the ordinary Bergman space $A^p(\bn)$. It was shown in \cite{W3} that
$A^p(\bn)$ is isomorphic to $l^p$. It is well known that $L^p[0,1]$ and $l^p$
are NOT isomorphic unless $p=2$ (see \cite{W}). It follows that, for $p\ge1$ and
$\alpha\in\R$, the spaces $A^p_\alpha(\bn)$ and $H^p(\bn)$ are not isomorphic
unless $p=2$.
\end{proof}

We note that there are a number of embedding theorems in
Beatrous-Burbea \cite{BB} that are related to our problem here. They appear as
Theorems 5.12 and 5.13 in \cite{BB}. Parts (ii) and (iii) of Theorem 5.12 in \cite{BB}
imply the embeddings in (a) and (c) below. We will also address the compactness
issue here.

\begin{lem}
We have
\begin{itemize}
\item[(a)] $H^p\subset A^p_{-1}$ for $2<p<\infty$.
\item[(b)] $H^2=A^2_{-1}$.
\item[(c)] $A^p_{-1}\subset H^p$ for $0<p\le2$.
\end{itemize}
Furthermore, the embeddings in (a) and (c) are proper (i.e., not equal) and non-compact.
\label{11}
\end{lem}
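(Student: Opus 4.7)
The plan divides into three independent pieces: the set-theoretic inclusions/equality in (a)--(c); the non-compactness claim; and the properness claim.

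For (b), the identification $H^2=A^2_{-1}$ is classical (both spaces are reproducing kernel Hilbert spaces on $\bn$ with the same kernel $K(z,w)=(1-\langle z,w\rangle)^{-n}$), so I would simply cite it from \cite{ZZ}. The embeddings $H^p\subset A^p_{-1}$ for $p>2$ and $A^p_{-1}\subset H^p$ for $0<p\le 2$ are precisely parts (ii)--(iii) of Theorem~5.12 in \cite{BB}, as already acknowledged immediately before the lemma; they are the classical area-function/Littlewood--Paley characterizations of $H^p$, and I do not plan to reprove them.

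For non-compactness, the plan is to recycle the reproducing-kernel test functions
$$f_a(z)=\frac{(1-|a|^2)^{n/p}}{(1-\langle z,a\rangle)^{2n/p}},\qquad a\in\bn,$$
used in the proof of Theorem~\ref{9}. Lemma~\ref{4} gives $\|f_a\|_{H^p}\sim 1$ uniformly in $a$. A direct computation of
$$Rf_a(z)=\frac{2n}{p}(1-|a|^2)^{n/p}\frac{\langle z,a\rangle}{(1-\langle z,a\rangle)^{2n/p+1}}$$
together with Lemma~\ref{4} applied with $\alpha=p-1>-1$ and $t=n>0$ yields $\|f_a\|_{A^p_{-1}}\sim 1$ as well (the factor $|\langle z,a\rangle|^p$ is bounded above by $1$ and bounded below by a positive constant on $\{|1-\langle z,a\rangle|<1/4\}$, the region carrying the bulk of the integral). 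Since $f_a\to 0$ uniformly on compact subsets of $\bn$ as $|a|\to 1^-$ while the target quasinorm stays bounded below, neither embedding is compact.

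For properness I would argue by contradiction. If $H^p$ and $A^p_{-1}$ coincided as sets, two applications of the closed graph theorem (valid in F-spaces and hence for $0<p<1$ as well) would force the two quasinorms to be equivalent, making the two spaces isomorphic as topological vector spaces. For $p\ge 1$ with $p\ne 2$ this is ruled out by Lemma~\ref{10}. The main obstacle is the sub-range $0<p<1$ of case (c), where Lemma~\ref{10} does not apply and one must exhibit an explicit element of $H^p\setminus A^p_{-1}$. In one variable, $\sum_k z^{2^k}/k\in H^p(\D)$ for every $p>0$ by Paley's theorem, while the Hadamard lacunary characterization $\|f\|_{A^p_{-1}(\D)}^p\sim\sum|a_k|^p$ for $f=\sum a_k z^{n_k}$ fails for this series as soon as $p\le 1$. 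Transplanting this obstruction to $\bn$ for $n\ge 2$ looks plausible via randomized homogeneous polynomials of lacunary degrees combined with Khintchine's inequality, which separates an $\ell^2$-type $H^p$-norm from an $\ell^p$-type $A^p_{-1}$-norm; pinning down this step is where the real work lies.
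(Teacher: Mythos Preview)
Your treatment of the inclusions (by citation) and of non-compactness (via the normalized kernels $f_a$ and Lemma~\ref{4}) is exactly what the paper does; the paper's computation of $\|f_a\|_{A^p_{-1}}$ is the same as yours, just written without spelling out the $|\langle z,a\rangle|^p$ issue. Your properness argument via Lemma~\ref{10} for $p\ge 1$ also matches the first option the paper offers.

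The one real gap is properness for $0<p<1$ in part~(c), which you yourself flag as unfinished. Your proposed route (lacunary/random constructions separating an $\ell^2$-type $H^p$ norm from an $\ell^p$-type $A^p_{-1}$ norm) is plausible on $\D$ but, as you note, genuinely requires work to carry to $\bn$. The paper avoids this entirely: instead of Lemma~\ref{10} it invokes the atomic decomposition of $A^p_{-1}$ in terms of kernel functions (available for all $p>0$ from \cite{ZZ}) together with the well-known fact that $H^p$ has no such decomposition unless $p=2$. This single observation covers every $p\neq 2$ at once and makes the $0<p<1$ construction unnecessary. I would recommend you swap your speculative lacunary argument for this atomic-decomposition remark; it closes the gap in one line.
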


\begin{proof}
In addition to \cite{BB}, the three relations above can also be found in \cite{Zhu1}.

To show that the embeddings in (a) and (c) are never compact here, we consider
functions of the form
$$f_a(z)=\left[\frac{(1-|a|^2)^n}{(1-\langle z,a\rangle)^{2n}}\right]^{1/p},
\qquad a\in\bn.$$
It is clear that $f_a(z)\to0$ uniformly for $z$ in any compact subset of $\bn$ as $|a|\to1$.

On the other hand, we have $\|f_a\|_{H^p}=1$ for all $a\in\bn$. We can also find
positive constants $c$ and $C$ such that $c\le\|f_a\|_{A^p_{-1}}\le C$ for all
$a\in\bn$. In fact, the smallest non-negative integer $N$ such that $Np+(-1)>-1$
is $N=1$, so by Lemma~\ref{4},
$$\|f_a\|^p_{A^p_{-1}}\sim(1-|a|^2)^n\inb\frac{(1-|z|^2)^{p-1}\,dv(z)}
{|1-\langle z,a\rangle|^{2n+p}}\sim1$$
for $a\in\bn$. This shows that the embeddings in both (a) and (c) are never compact.

There are several ways to show that the embeddings in (a) and (c) are proper. One way
is to use Lemma~\ref{10}, at least in the case $p\ge1$. Another way is to use the
well-known fact that $A^p_{-1}$ has an ``atomic decomposition'' in terms of
kernel functions (see \cite{ZZ} for example), while $H^p$ definitely does not
have such a decomposition unless $p=2$.
\end{proof}

\begin{lem}\label{12}
If $p>0$ and $\alpha<-1$, then $A^p_\alpha\subset H^p$ and the embedding is
compact.
\end{lem}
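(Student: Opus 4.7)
My plan is to bootstrap via a slightly larger Hardy exponent. The strict inequality $\alpha<-1$ is equivalent to $(n+1+\alpha)/p<n/p$, which leaves slack to insert an exponent $q>p$ satisfying
$$\frac{n+1+\alpha}{p}<\frac{n}{q}.$$
I would argue that such a $q$ always exists. If $n+1+\alpha\le0$, every $q>p$ qualifies. If $n+1+\alpha>0$, any $q$ in the interval $(p,\,np/(n+1+\alpha))$ does the job, and this interval is nonempty precisely because $\alpha<-1$ forces $n+1+\alpha<n$, hence $np/(n+1+\alpha)>p$.

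With such $q$ fixed, Theorem~\ref{9}(b) (just proved) immediately supplies a compact embedding $A^p_\alpha\hookrightarrow H^q$. To finish, I would invoke the elementary nesting $H^q\subset H^p$ for $q>p$, which follows from Jensen's inequality applied to the probability measure $d\sigma$ on $\sn$: for every $r\in(0,1)$,
$$\left(\ins|f(r\zeta)|^p\,d\sigma(\zeta)\right)^{1/p}\le\left(\ins|f(r\zeta)|^q\,d\sigma(\zeta)\right)^{1/q},$$
so that $\|f\|_{H^p}\le\|f\|_{H^q}$. Composing the compact embedding $A^p_\alpha\hookrightarrow H^q$ with the bounded inclusion $H^q\hookrightarrow H^p$ then yields both the continuous inclusion $A^p_\alpha\subset H^p$ and its compactness in one stroke, since compact composed with bounded is compact (which is easy to verify with the present definition of compactness for quasi-Banach spaces).

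I do not anticipate any real obstacle here. All the heavy lifting has already been done in Theorems~\ref{8} and \ref{9}; the strict condition $\alpha<-1$ (rather than $\alpha\le-1$) is exactly what creates the room to choose $q$ strictly greater than $p$. This is consistent with Lemma~\ref{11}, where $A^p_{-1}\subset H^p$ fails in general when $p>2$, so no such $q$-insertion is possible at the endpoint $\alpha=-1$.
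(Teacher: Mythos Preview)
Your argument is correct. Both your proof and the paper's hinge on the same observation: the strict inequality $\alpha<-1$ creates slack in the Theorem~A condition $(n+1+\alpha)/p\le n/q$, allowing an intermediate space to be inserted. You factor as $A^p_\alpha\hookrightarrow H^q\hookrightarrow H^p$ with $q>p$, invoking Theorem~\ref{9}(b) for the compact first step and the standard Hardy nesting for the second. The paper instead factors as $A^p_\alpha\hookrightarrow A^{p-\varepsilon}_\alpha\hookrightarrow H^p$ with $\varepsilon>0$ small, using Lemma~\ref{1} for the compact first step and Theorem~A for the second (after first disposing of the range $0<p\le2$ separately via $A^p_{-1}\subset H^p$). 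The two routes are dual versions of the same idea---you move up in the Hardy exponent, the paper moves down in the Bergman exponent---and yours is arguably cleaner, since it avoids any case split and feeds compactness in directly from Theorem~\ref{9}(b) rather than from an auxiliary lemma.
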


\begin{proof}
If $0<p\le2$ and $\alpha<-1$, then $A^p_\alpha\subset A^p_{-1}\subset H^p$. The
compactness of the embedding follows from Lemma~\ref{2}.

If $p>2$ (and for $0<p\le2$ as well) and $\alpha<-1$, then we can find a sufficiently
small $\varepsilon\in(0,p)$ such that
$$\frac{n+1+\alpha}{p-\varepsilon}\le\frac np.$$
Then by Theorem A, we have $A^p_\alpha\subset A^{p-\varepsilon}_\alpha\subset H^p$.
The compactness of the embedding follows from Lemma~\ref{1}.
\end{proof}

\begin{thm}\label{13}
For $0<p<\infty$ and $\alpha\in\R$ we have the following.
\begin{itemize}
\item[(a)] If $p\le2$, then $A^p_\alpha\subset H^p$ if and only if $\alpha\le-1$.
\item[(b)] If $p>2$, then $A^p_\alpha\subset H^p$ if and only if $\alpha<-1$.
\item[(c)] If $p\ge2$, then $H^p\subset A^p_\alpha$ if and only if $\alpha\ge-1$.
\item[(d)] If $p<2$, then $H^p\subset A^p_\alpha$ if and only if $\alpha>-1$.
\end{itemize}
Moreover, each of the possible embeddings above is compact if and only if strict inequality
occurs in the condition for $\alpha$.
\end{thm}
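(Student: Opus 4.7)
The plan is to dispatch the four containments (a)--(d) and their compactness companions with a small toolkit: Lemma~\ref{11} at the critical weight $\alpha=-1$, Lemma~\ref{12} for $\alpha<-1$, the monotonicity Lemmas~\ref{1} and \ref{2}, the sharp pointwise estimates of Lemma~\ref{3} for each ``only if'' direction at $\alpha\neq -1$, and Theorem~A (already proven) for the one subtle inclusion appearing in (d).

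For the \emph{if} directions, parts (a), (b), and (c) with $\alpha=-1$ follow verbatim from Lemma~\ref{11}; (a) with $\alpha<-1$ and all of (b) follow from Lemma~\ref{12}; (c) with $\alpha>-1$ is the chain $H^p\subset A^p_{-1}\subset A^p_\alpha$ via Lemma~\ref{11} and Lemma~\ref{2}. Only (d) with $\alpha>-1$ requires real work: here I would pick an auxiliary $q\in(p,(n+1+\alpha)p/n]$ (non-empty precisely because $\alpha>-1$), apply Theorem~\ref{6} to conclude $H^p\subset A^q_\alpha$, and then invoke Lemma~\ref{1} to pass from $A^q_\alpha$ down to $A^p_\alpha$.

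The \emph{only if} directions split into two cases. At $\alpha>-1$ (the obstruction in (a) and (b)), the closed-graph theorem combined with Lemma~\ref{3} repeats the argument used in the proofs of Theorems~\ref{6} and \ref{8}: if $A^p_\alpha\subset H^p$ held, every $f\in A^p_\alpha$ would satisfy $|f(z)|\le M\|f\|_{A^p_\alpha}(1-|z|^2)^{-n/p}$, contradicting the sharpness of the exponent $(n+1+\alpha)/p$ asserted in Lemma~\ref{3}(b). At $\alpha<-1$ (the obstruction in (c) and (d)), I would avoid the growth estimates and route through $A^p_{-1}$: Lemma~\ref{2} gives $A^p_\alpha\subset A^p_{-1}$, and combined with a hypothetical $H^p\subset A^p_\alpha$ this yields $H^p\subset A^p_{-1}$. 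For $p<2$ this contradicts the properness clause of Lemma~\ref{11}(c), which provides an explicit $g\in H^p\setminus A^p_{-1}$; for $p\ge 2$, Lemma~\ref{2} makes the link $A^p_\alpha\hookrightarrow A^p_{-1}$ \emph{compact}, so the composed $H^p\hookrightarrow A^p_{-1}$ would be compact, contradicting the non-compactness clause of Lemma~\ref{11} (or the fact that the identity on $H^2=A^2_{-1}$ is non-compact). Finally, the two boundary cases $\alpha=-1$ inside (b) and (d) are precisely the properness clauses of Lemma~\ref{11}(a) and Lemma~\ref{11}(c) respectively, each producing a separating function.

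For compactness, strict inequality in $\alpha$ makes the chains above automatically compact: Lemma~\ref{12} directly delivers compactness for (a) with $\alpha<-1$ and for (b); for (c) with $\alpha>-1$, I would compose the bounded $H^p\hookrightarrow A^p_{-1}$ with the compact $A^p_{-1}\hookrightarrow A^p_\alpha$ of Lemma~\ref{2}; and for (d) with $\alpha>-1$, choosing $q>p$ with strict inequality in the Theorem~A parameters causes Theorem~\ref{9}(a) to provide a compact link $H^p\hookrightarrow A^q_\alpha$. Non-compactness at $\alpha=-1$ in (a) and (c) is already inside Lemma~\ref{11}, realized by the normalized kernels $f_a(z)=[(1-|a|^2)^n/(1-\langle z,a\rangle)^{2n}]^{1/p}$, which are bounded below in both the $H^p$ and the $A^p_{-1}$ quasi-norm while tending to $0$ uniformly on compact subsets of $\bn$. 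I expect the main obstacle to be bookkeeping rather than any individual step: the growth/properness dichotomy must cover every $\alpha\neq -1$ uniformly without encroaching on $\alpha=-1$, which is exactly the regime reserved for the Lemma~\ref{11} argument.
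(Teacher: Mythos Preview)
Your proof is correct and follows the same overall architecture as the paper: anchor at $\alpha=-1$ via Lemma~\ref{11}, push to $\alpha<-1$ via Lemma~\ref{12}, use the monotonicity Lemmas~\ref{1} and~\ref{2}, and block $\alpha>-1$ in (a) and (b) with the sharp growth rates of Lemma~\ref{3}. The differences are localized but worth noting. For the ``if'' direction of (d) the paper simply uses integration in polar coordinates (since $\alpha>-1$), and for compactness picks $-1<\beta<\alpha$ and composes $H^p\subset A^p_\beta$ with the compact $A^p_\beta\subset A^p_\alpha$ from Lemma~\ref{2}; your route through Theorem~A and Lemma~\ref{1} is equally valid but heavier. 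More interestingly, for the ``only if'' obstructions at $\alpha=-1$ in (b) and at $\alpha<-1$ in (c), the paper invokes Lemma~\ref{10} (the Banach-space non-isomorphism of $H^p$ and $A^p_\alpha$ for $p\neq 2$), whereas you bypass this entirely: in (b) you use the properness clause of Lemma~\ref{11}(a), and in (c) you argue that a hypothetical $H^p\subset A^p_\alpha$ composed with the compact $A^p_\alpha\subset A^p_{-1}$ would force the inclusion $H^p\subset A^p_{-1}$ to be compact, contradicting Lemma~\ref{11}. Your variant is more self-contained, since it rests only on the elementary properness and non-compactness statements of Lemma~\ref{11} rather than on the isomorphism theory behind Lemma~\ref{10}.
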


\begin{proof}
If $p\le2$ and $\alpha\le-1$, then by Lemmas \ref{2} and \ref{11}, we have
$A^p_\alpha\subset A^p_{-1}\subset H^p$, and the inclusion is compact if
and only if $\alpha<-1$. If $p\le2$ and $\alpha>-1$, then it is impossible for
$A^p_\alpha\subset H^p$, because of the well-known optimal growth rates
for functions in $A^p_\alpha$ and $H^p$ (see Lemma~\ref{3}). This proves part (a).

If $p>2$ and $\alpha<-1$, then by Lemma \ref{12} we have $A^p_\alpha\subset H^p$
and the embedding is always compact. If $p>2$ and $\alpha\ge-1$, then it is impossible
for $A^p_\alpha\subset H^p$. In fact, even $A^p_{-1}\subset H^p$ is impossible;
otherwise, we would have $A^p_{-1}=H^p$ by Lemma~\ref{11}, which contradicts
the well-known fact that $A^p_\alpha$ and $H^p$ are not isomorphic with equivalent
norms unless $p=2$ (see Lemma~\ref{10}). This proves part (b).

If $p\ge2$ and $\alpha\ge-1$,  it follows from Lemmas \ref{11} and \ref{2} that
$H^p\subset A^p_{-1}\subset A^p_\alpha$ (when $\alpha>-1$, this also follows
from integration by polar coordinates), and the embedding $H^p\subset A^p_\alpha$
is compact if and only if $\alpha>-1$. On the other hand, if $p\ge2$ and $\alpha<-1$,
then we cannot possibly have $H^p\subset A^p_\alpha$; otherwise, by Lemma \ref{12},
we would have $A^p_\alpha=H^p$, which is impossible (even for $p=2$ here, because
$\alpha<-1$). This proves part (c).

Finally, if $p<2$ and $\alpha>-1$, then $H^p\subset A^p_\alpha$ by integration in
polar coordinates. If $-1<\beta<\alpha$, then we also have $H^p\subset A^p_\beta
\subset A^p_\alpha$, so the embedding $H^p\subset A^p_\alpha$ is compact in
view of Lemma \ref{2}. On the other hand, if $p<2$ and $\alpha\le-1$, then
$A^p_\alpha\subset A^p_{-1}\subset H^p$ by Lemma \ref{11}, which shows that
$H^p\subset A^p_\alpha$ is impossible; otherwise, we would have $A^p_{-1}=H^p$,
a contradiction to Lemma \ref{11}. This proves part (d) and completes the proof
of the theorem.
\end{proof}

\section{The proof of Theorem B}

In this section we will complete the proof of Theorem B. The case $p=q$ was already
settled in the previous section, so we only need to address the case $q<p$ here,
namely, we will determine exactly when $H^p\subset A^q_\alpha$ or
$A^p_\alpha\subset H^q$, where $0<q<p<\infty$.

We will begin with possible embeddings $H^p\subset A^q_\alpha$ for $q<p$.
First observe that if $\alpha\le-(n+1)$, then the inclusion
$H^p\subset A^q_\alpha$ is impossible, because of known pointwise estimates
in \cite{ZZ} for functions in $A^q_\alpha$ (they cannot exceed a power of logarithmic
growth near the boundary). Therefore, the most interesting case here is when
$\alpha>-(n+1)$ or $n+1+\alpha>0$.

If $n+1+\alpha>0$, it follows from optimal pointwise estimates (see \cite{ZZ,Zhu1}
or Lemma~\ref{3} here)
that $H^p\subset A^q_\alpha$ implies
$$\frac np\le\frac{n+1+\alpha}q.$$
So we actually have
$$\alpha\ge-(n+1)+\frac{nq}p=-1-n\left[1-\frac qp\right]\in(-\infty,-1).$$
Unlike the situation in Section 3, this condition (derived from
the optimal pointwise estimates) is not sufficient for $H^p\subset A^q_\alpha$.
We will see that the correct condition is $\alpha>-1$ for $q<2$ and $\alpha\ge-1$
for $q\ge2$. In order to prove this, we will need a certain ``Carleson measure''-type
theorem.

For any $\zeta\in\sn$ we define a so-called admissible approach region at $\zeta$
as follows:
$$\Gamma(\zeta)=\{z\in\bn:|1-\langle z,\zeta\rangle|<1-|z|^2\}.$$
This is standard notation in the literature, although it can be confused with the
classical gamma function. For any $r\in(0,\infty)$ and any Lebesgue measurable
function $g$ on $\sn$ we define a function $A_r(g)$ on $\sn$ by
$$A_r(g)(\zeta)=\left[\int_{\Gamma(\zeta)}|g(z)|^r\,d\lambda(z)\right]^{\frac1r},
\qquad \zeta\in\sn,$$
where
$$d\lambda(z)=\frac{dv(z)}{(1-|z|^2)^{n+1}}$$
is the M\"obius invariant volume measure on $\bn$. We also define a function
$A_\infty(g)$ on $\sn$ by
$$A_\infty(g)(\zeta)=\sup_{z\in\Gamma(\zeta)}|g(z)|,\qquad \zeta\in\sn.$$

\begin{thm}\label{14}
Suppose $0<q<p<\infty$, $\mu$ is a positive Borel measure on $\bn$, $m$ is
a positive integer, and
$$g(z)=\frac{\mu(D(z,1))}{(1-|z|^2)^{qm+n}},\qquad z\in\bn,$$
where $D(z,1)$ is the Bergman metric ball centered at $z$ with radius $1$. Let
$$A(g)(\zeta)=\begin{cases} A_{2/(2-q)}(g)(\zeta),&q<2,\\[10pt]
A_\infty(g)(\zeta),&q\ge2,\end{cases}\qquad \zeta\in\sn.$$
Then the following conditions are equivalent.
\begin{itemize}
\item[(a)] There exists a positive constant $C$ such that
$$\left[\inb|R^mf(z)|^q\,d\mu(z)\right]^{\frac1q}\le C\|f\|_{H^p}$$
for all $f\in H^p$.
\item[(b)] The function $A(g)(\zeta)$ belongs to $L^{p/(p-q)}(\sn, d\sigma)$.
\end{itemize}
\end{thm}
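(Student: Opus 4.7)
The plan is to deduce Theorem \ref{14} from the Lusin area-function characterization of the Hardy spaces combined with a discretization of $\mu$ in the Bergman metric, which recasts (a) as a weighted tent-space inequality of Luecking type. I will use throughout that for every positive integer $m$ there is an identity
$$\|f\|_{H^p}^p\sim|f(0)|^p+\int_{\sn}\left[\int_{\Gamma(\zeta)}|R^m f(z)|^2(1-|z|^2)^{2m}\,d\lambda(z)\right]^{p/2}d\sigma(\zeta),$$
so that $R^m f$ sits in a tent space with norm equivalent to $\|f\|_{H^p}$.

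First I would fix a Bergman $\delta$-lattice $\{a_k\}\subset\bn$ with bounded overlap and replace both sides of (a) by discrete quantities. By the subharmonicity of $|R^m f|^q$ and the standard Bergman-ball covering, the left-hand side of (a) is comparable to $\sum_k|R^m f(a_k)|^q\,\mu(D(a_k,1))$, while $\mu(D(a_k,1))\sim g(a_k)(1-|a_k|^2)^{qm+n}$. Similarly $A_r(g)(\zeta)^r\sim\sum_{a_k\in\Gamma(\zeta)}g(a_k)^r$, and the spherical cap $I_k=\{\zeta\in\sn:a_k\in\Gamma(\zeta)\}$ has $\sigma(I_k)\sim(1-|a_k|^2)^n$. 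These comparisons reduce the equivalence to a weighted discrete inequality on the lattice.

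For (b) $\Rightarrow$ (a), I would estimate the discrete sum fiber by fiber over $\sn$. For a fixed $\zeta$, H\"older with exponents $(2/q,\,2/(2-q))$ in the case $q<2$ pairs an $\ell^2$-sum of $|R^m f(a_k)|(1-|a_k|^2)^m$ (which reconstitutes the $L^2$ area-integrand of $R^m f$) against an $\ell^{2/(2-q)}$-sum of $g(a_k)$ values (which is $A_{2/(2-q)}(g)(\zeta)$). In the case $q\ge 2$ one replaces the $\ell^2$-sum by the supremum and uses $A_\infty(g)$ instead. A final outer H\"older with exponents $(p/q,\,p/(p-q))$ against $A(g)\in L^{p/(p-q)}(\sn)$, combined with the area-norm identity for $\|f\|_{H^p}$, closes the implication.

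For (a) $\Rightarrow$ (b), the crux is producing a rich enough family of test functions, and the two cases diverge sharply. When $q\ge 2$, a single extremal kernel $f_a(z)=(1-|a|^2)^{n/p}/(1-\langle z,a\rangle)^{s}$ with a suitable exponent $s$ already captures $A_\infty(g)$ at each point, and duality against $L^{p/(p-q)}(\sn)$ transfers the inequality in (a) to the desired membership in (b). When $q<2$ no single test function suffices; instead, I would randomize by taking
$$f(z)=\sum_{k}\varepsilon_k\,c_k\frac{(1-|a_k|^2)^{n/p}}{(1-\langle z,a_k\rangle)^{s}}$$
with independent Rademacher signs $\varepsilon_k$ and scalars $c_k$ to be chosen by duality. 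Taking expectation in (a) and applying Khinchine's inequality on both sides converts $q$th-powers of random sums into $\ell^2$-norms, after which outer duality against $L^{p/(p-q)}(\sn)$ produces exactly the $A_{2/(2-q)}(g)$ quantity. The main obstacle will be the randomization step: one must control the $H^p$-norm of the random lacunary sum by a Khinchine-type area-integral bound and carry out the tent-space dualization in the spirit of Luecking while respecting the geometry of admissible regions on the sphere. Once this is in place, duality fixes the $c_k$ and yields $A(g)\in L^{p/(p-q)}(\sn, d\sigma)$.
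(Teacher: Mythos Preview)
The paper does not actually prove Theorem~\ref{14}; its proof consists of pointing to Luecking's upper-half-space theorem and, for the ball, to Arsenovi\'c's $\mathcal M$-harmonic version, noting that one implication is a direct corollary and the other follows by rerunning Arsenovi\'c's proofs with holomorphic functions in place of $\mathcal M$-harmonic ones. Your outline is precisely the Luecking/Arsenovi\'c machinery those citations encode---area-function description of $H^p$, Bergman-lattice discretization, fiberwise H\"older for (b)$\Rightarrow$(a), and Khinchine-randomized kernel sums plus tent-space duality for (a)$\Rightarrow$(b)---so in substance you are on the same route, just written out rather than cited.

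One genuine gap: your treatment of (a)$\Rightarrow$(b) in the range $q\ge2$ asserts that a \emph{single} kernel $f_a$ ``already captures $A_\infty(g)$ at each point'' and that duality then gives $A_\infty(g)\in L^{p/(p-q)}$. This does not work. Plugging $f_a(z)=(1-|a|^2)^{n/p}(1-\langle z,a\rangle)^{-2n/p}$ into (a) and localizing to $D(a,1)$ only yields the pointwise bound
\[
g(a)\ \lesssim\ (1-|a|^2)^{-n(p-q)/p},
\]
which blows up as $|a|\to1$ and says nothing about $A_\infty(g)(\zeta)=\sup_{z\in\Gamma(\zeta)}g(z)$, let alone its $L^{p/(p-q)}$ membership. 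In Luecking's and Arsenovi\'c's proofs the necessity for $q\ge2$ still requires test functions that are \emph{finite linear combinations} of lattice kernels, with coefficients selected by duality in the appropriate sequence tent space; the simplification over $q<2$ is that one may avoid Rademacher averaging (using instead $\ell^2\hookrightarrow\ell^q$-type convexity), not that a single kernel suffices. Once you replace the single-kernel claim by the same combination-of-kernels/duality argument you already describe for $q<2$ (dropping the Khinchine step), the sketch is sound.
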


\begin{proof}
A version of this theorem was first proved by Luecking in \cite{L} in the context of
real upper-half spaces, using mostly techniques from real and harmonic analysis. It was
widely believed and accepted that Luecking's results are still valid for the open unit disc
with some obvious modifications. But the situation for the higher dimensional unit ball
was much less clear, and people were certainly hesitant about directly quoting Luecking's
theorems for the unit ball. For example, when a special case of Luecking's theorem
was needed for the unit ball in \cite{Pau}, Pau worked out all details in an appendix at
the end of the paper.

In 1999, Arsenovi\'c \cite{A} proved a version of Luecking's theorem in the contex of
$\mathcal M$-harmonic functions on the open unit ball $\bn$. The standard holomorphic
Hardy space $H^p$ is contained in the $\mathcal M$-harmonic Hardy space
${\mathcal H}^p$. Therefore, half of our theorem here is a direct consequence of \cite{A}.
The other half of our theorem follows from the proofs in \cite{A}, when
$\mathcal M$-harmonic functions are replaced by holomorphic functions.
\end{proof}

The following technical result is probably known to experts in several complex variables.
But we could not find a reference, so we include a detailed proof here.

\begin{lem}\label{15}
Let $\zeta\in\sn$ and $t\in\R$. Then
$$\int_{\Gamma(\zeta)}\frac{dv(z)}{(1-|z|^2)^{n+1+t}}\sim\int_{\Gamma(\zeta)}
\frac{dv(z)}{|1-\langle z,\zeta\rangle|^{n+1+t}}.$$
Moreover, these integrals are finite if and only if $t<0$.
\end{lem}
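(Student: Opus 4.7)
The plan is to first establish the pointwise comparison $|1-\langle z,\zeta\rangle|\sim 1-|z|^2$ on $\Gamma(\zeta)$, from which the equivalence of the two integrals is immediate. The upper bound $|1-\langle z,\zeta\rangle|<1-|z|^2$ is the defining inequality of $\Gamma(\zeta)$, and the lower bound follows from
\[|1-\langle z,\zeta\rangle|\ge 1-|\langle z,\zeta\rangle|\ge 1-|z|\ge \tfrac{1}{2}(1-|z|^2).\]
Raising to the power $n+1+t$ and inverting yields the claimed asymptotic equivalence of the two integrands pointwise on $\Gamma(\zeta)$.

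For the finiteness assertion, since the integrand is bounded on $\Gamma(\zeta)\cap\{|z|\le 1/2\}$, only a neighborhood of $\zeta$ matters. By the unitary invariance of $\bn$ and of $dv$, I would reduce to $\zeta=e_1=(1,0,\ldots,0)$ and pass to coordinates $w=s+iy=1-z_1\in\C$ and $z'=(z_2,\ldots,z_n)\in\C^{n-1}$, for which $|1-\langle z,e_1\rangle|=|w|$ and
\[1-|z|^2=2s-s^2-y^2-|z'|^2.\]
A direct check shows that, for $(w,z')$ in some fixed small neighborhood of the origin, the set $\Gamma(e_1)$ is comparable (each side contains the other up to constants) to the model region
\[\{(s,y,z'):\ s>0,\ |y|\le cs,\ |z'|^2\le cs\},\]
on which $|w|\sim s\sim 1-|z|^2$. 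Applying Fubini on this model region gives
\[\int_0^{c_0}\!\frac{ds}{s^{n+1+t}}\int_{|y|\lesssim s}\!dy\int_{|z'|^2\lesssim s}\!dv(z')
\sim\int_0^{c_0}\!\frac{s\cdot s^{n-1}}{s^{n+1+t}}\,ds=\int_0^{c_0}\!\frac{ds}{s^{t+1}},\]
where the factor $s\cdot s^{n-1}$ is the measure of $\{|y|\lesssim s\}\subset\R$ times the volume of $\{|z'|^2\lesssim s\}\subset\C^{n-1}\cong\R^{2n-2}$. This last integral is finite exactly when $t<0$, and both directions of the ``if and only if'' statement follow because the comparison between $\Gamma(e_1)$ and the model cone is two-sided.

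The main obstacle will be the careful verification of the two-sided geometric comparison between $\Gamma(e_1)$ and the model cone $\{s>0,\,|y|\le cs,\,|z'|^2\le cs\}$, with uniform implicit constants valid on some fixed neighborhood of $e_1$; in particular, one has to check that the lower-order terms $s^2+y^2+|z'|^2$ in the formula for $1-|z|^2$ do not disturb the shape of the region on the relevant scale. Once this geometric fact is in place, the remaining computation is routine.
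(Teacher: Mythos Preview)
Your proposal is correct. The first paragraph (the pointwise comparison $|1-\langle z,\zeta\rangle|\sim 1-|z|^2$ on $\Gamma(\zeta)$) is exactly what the paper does. For the finiteness statement, however, you take a genuinely different route from the paper.

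The paper argues as follows. Finiteness for $t<0$ is read off from the Forelli--Rudin estimate (Lemma~\ref{4}). For the converse, it writes the integral in spherical polars $z=r\eta$, then uses the slice formula \cite[(1.13)]{Zhu1} to reduce the $\sn$-integral to an area integral over the disc region $D_r=\{w\in\D:|1-rw|<1-r^2\}$. A rotation-counting argument gives $\int_{D_r}(1-|w|^2)^{n-2}\,dA(w)\sim(1-r)^n$, and the whole thing collapses to $\int_0^1(1-r)^{-1-t}\,dr$. The case $n=1$ is handled separately.

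Your approach instead passes to Siegel-type coordinates $(s,y,z')$ at the boundary point and sandwiches $\Gamma(e_1)$ between two model ``parabolic cones'' $\{|y|\le cs,\ |z'|^2\le cs\}$, on which the integral is computed by Fubini and again reduces to $\int_0^{c_0}s^{-1-t}\,ds$. The two-sided containment you flag as the main obstacle is genuinely elementary: from $|w|<1-|z|^2\le 2s$ one gets $|y|<\sqrt{3}\,s$ and $|z'|^2<2s$, and conversely a small enough $c$ forces $|w|\le\sqrt{1+c^2}\,s<2s-cs-O(s^2)\le 1-|z|^2$ for $s$ small. One small point worth tightening: the relation $1-|z|^2\sim s$ that you invoke is best justified on $\Gamma(e_1)$ itself (from $|w|<1-|z|^2\le 2|w|$ and $s\le|w|<2s$) rather than on the outer model cone, where it can fail if the constant is too large; once you replace the integrand by $s^{-(n+1+t)}$ \emph{before} enlarging the domain, the sandwich goes through cleanly. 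Your argument is more self-contained (no appeal to Lemma~\ref{4} or to \cite[(1.13)]{Zhu1}) and treats all $n\ge 1$ uniformly; the paper's argument trades that for staying in familiar ball/disc coordinates and quoting ready-made formulas.
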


\begin{proof}
For $z\in\Gamma(\zeta)$ we have
$$1-|z|^2\ge|1-\langle z,\zeta\rangle|\ge1-|\langle z,\zeta\rangle|
\ge1-|z|.$$
Thus
$$1-|z|^2\sim1-|z|\sim|1-\langle z,\zeta\rangle|$$
for $z\in\Gamma(\zeta)$. This shows that the two integrals in the lemma are comparable.

Let us write
$$I(t)=\int_{\Gamma(\zeta)}\frac{dv(z)}{|1-\langle z,\zeta\rangle|^{n+1+t}}$$
for $z\in\bn$. It is clear that $I(t)$ is independent of $\zeta$. If $t<0$, the finiteness
of $I(t)$ follows immediately from Lemma \ref{4}. The other direction is more complicated.

If $n=1$ and if we take $\zeta=1$, then
$$I(t)=\int_{\Gamma(1)}\frac{dA(z)}{|1-z|^{2+t}}.$$
It follows easily from integration in polar coordinates centered at $z=1$ (namely, 
$z=1+re^{i\theta}$) that $I(t)<\infty$ implies $t<0$.

When $n>1$, we write
$$I(t)=\inb\frac{\chi_{\Gamma(\zeta)}(z)\,dv(z)}{|1-\langle z,\zeta\rangle|^{n+1+t}},$$
where $\chi_{\Gamma(\zeta)}$ is the characteristic function of
$\Gamma(\zeta)$. By polar coordinates, we have
$$I(t)=2n\int_0^1r^{2n-1}\,dr\ins\frac{\chi_{\Gamma(\zeta)}(r\eta)\,d\sigma(\eta)}
{|1-r\langle\eta,\zeta\rangle|^{n+1+t}}.$$
For any fixed $r\in(0,1)$, we can write
$$\ins\frac{\chi_{\Gamma(\zeta)}(r\eta)\,d\sigma(\eta)}{|1-r\langle\eta,\zeta
\rangle|^{n+1+t}}=\ins f(\langle\eta,\zeta\rangle)\,d\sigma(\eta),$$
where
$$f(w)=\begin{cases}0, &|1-rw|\ge1-r^2,\\[8pt]
\frac1{|1-rw|^{n+1+t}},&|1-rw|<1-r^2.\end{cases}$$
By (1.13) of \cite{Zhu1}, we have for $n>1$ that
\begin{align*}
I(t)&=2n(n-1)\int_0^1r^{2n-1}\,dr\ind(1-|w|^2)^{n-2}f(w)\,dA(w)\\
&=2n(n-1)\int_0^1r^{2n-1}\,dr\int_{D_r}\frac{(1-|w|^2)^{n-2}\,dA(w)}{|1-rw|^{n+1+t}},
\end{align*}
where $dA$ is normalized area measure on the unit disc $\D$ and
$$D_r=\{w\in\D: |1-rw|<1-r^2\}.$$
Since $1-r\le|1-rw|<1-r^2$ for $w\in D_r$, we have
$$I(t)\sim\int_0^1\frac{r^{2n-1}}{(1-r)^{n+1+t}}\,dr\int_{D_r}(1-|w|^2)^{n-2}\,dA(w).$$

For $r\in(0,1)$ the set $D_r$ is the intersection of two discs:
$$D_r=\{w:|w|<1\}\bigcap\left\{w:\left|w-\frac1r\right|<\frac1r-r\right\}.$$
This intersection is between the two vertical lines $x=r$ and $x=1$.
See Figure \ref{Fig1} below.

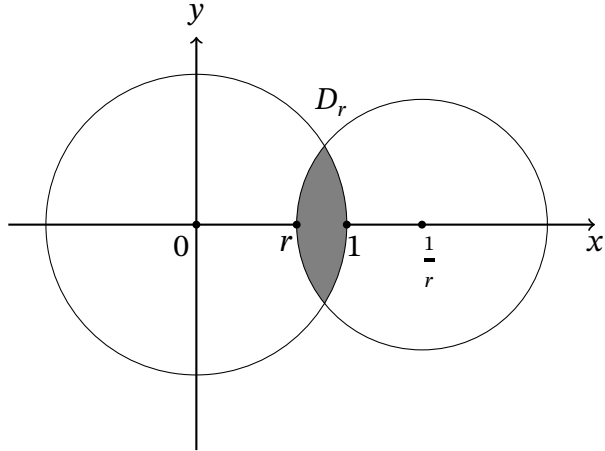
\begin{figure}[h]
\centering
\begin{tikzpicture}

\draw [thick,->](-2.5,0)-- (5.3,0);
\draw [thick,->](0,-3)-- (0,2.5);
\def\CA{(0,0) circle (2)}
\def\CB{(3,0) circle (5/3)}
\begin{scope}
\clip \CA;
\fill[gray,opacity=0.5] \CB;
\end{scope}
\draw \CA \CB;
\fill(2,0) circle (1pt);
\node[below] (P) at (2.1,0) {$1$};
\node[above] (P) at (1.8,1.3) {$D_r$};
\node[below] (P) at (3.1,0) {$\frac{1}{r}$}; \node[below] (P) at (1.2,0) {$r$}; \node[below] (P) at (5.3,0) {$x$}; \node[above] (P) at (0,2.5) {$y$}; \node[below] (P) at (-0.2,0) {$0$};
\fill(3,0) circle (1.5pt);
\fill(4/3,0) circle (1.5pt);
\fill(2,0) circle (1.5pt);
\fill(0,0) circle (1.5pt);
\end{tikzpicture}
\caption{The region $D_r$}
\label{Fig1}
\end{figure}

If we rotate the region $D_r$ around the origin by any angle to obtain another
region $D_r'$, then by symmetry, we have
$$\int_{D_r}(1-|w|^2)^{n-2}\,dA(w)=\int_{D_r'}(1-|w|^2)^{n-2}\,dA(w).$$
It is clear that it takes roughly $[2\pi/(1-r)]$-many such regions to cover
the annulus $(r+1)/2<|w|<1$. Therefore,
\begin{align*}
I(t)&\sim\int_0^1\frac{r^{2n-1}\,dr}{(1-r)^{n+t}}\int_{(r+1)/2<|w|<1}(1-|w|^2)^{n-2}
\,dA(w)\\
&\sim\int_0^1\frac{r^{2n-1}\,dr}{(1-r)^{t+1}}\sim\int_0^1\frac{dr}{(1-r)^{t+1}}.
\end{align*}
It is then clear that $I(t)<\infty$ if and only if $t<0$.
\end{proof}

We can now complete the proof of Theorem B, which we restate as follows.

\begin{thm}\label{16}
Suppose $0<q\le p<\infty$ and $\alpha\in\R$.
\begin{enumerate}
\item[(a)] For $q\le2$ we have $A^p_\alpha\subset H^q$ if and only if $\alpha\le-1$.
For $q>2$ we have $A^p_\alpha\subset H^q$ if and only if $\alpha<-1$.
\item[(b)] For $q<2$ we have $H^p\subset A^q_\alpha$ if and only if $\alpha>-1$.
For $q\ge2$ we have $H^p\subset A^q_\alpha$ if and only if $\alpha\ge-1$.
\item[(c)] Any embedding in (a) and (b) is compact if and only if the inequality
involving $\alpha$ is strict.
\end{enumerate}
\end{thm}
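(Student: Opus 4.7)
My plan is to reduce to the case $q<p$ (since $p=q$ is handled in Section~4) and then attack parts~(b) and~(a) with complementary tools: Theorem~\ref{14} for~(b), a homogeneous-polynomial test combined with Khinchin's inequality for most of~(a), and a duality argument for the one borderline subcase. Compactness in~(c) is then a parallel analysis with strict inequalities.

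For part~(b), I use the derivative description $\|f\|_{q,\alpha}\sim|f(0)|+(\inb|R^m f|^q(1-|z|^2)^{mq+\alpha}\,dv)^{1/q}$ with $m$ chosen so that $mq+\alpha>-1$. This rewrites $H^p\subset A^q_\alpha$ as a Carleson-type inequality for $d\mu=(1-|z|^2)^{mq+\alpha}\,dv$. Applying Theorem~\ref{14} and observing $\mu(D(z,1))\sim(1-|z|^2)^{n+1+mq+\alpha}$, one finds $g(z)\sim(1-|z|^2)^{\alpha+1}$, a radial power. For $q<2$, Lemma~\ref{15} shows $A_{2/(2-q)}(g)$ is a finite rotation-invariant constant on $\sn$ exactly when $\alpha>-1$, and is identically $\infty$ otherwise; for $q\ge2$, $A_\infty(g)(\zeta)=\sup_{z\in\Gamma(\zeta)}(1-|z|^2)^{\alpha+1}$ is bounded iff $\alpha\ge-1$. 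In each case the surviving function is constant and hence automatically in $L^{p/(p-q)}(\sn,d\sigma)$; Theorem~\ref{14} then yields part~(b).

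The ``if'' direction of part~(a) is a short chain: for $\alpha<-1$ combine Lemma~\ref{12} with $H^p\subset H^q$, and for $\alpha\le-1$ with $q\le2$ combine Lemmas~\ref{1} and~\ref{2} with Lemma~\ref{11}. For the ``only if'' direction I test on holomorphic homogeneous polynomials $f$ of degree $m$, where polar integration gives $\|f\|_{A^p_\alpha}\sim m^{-(1+\alpha)/p}\|f\|_{L^p(\sn)}$ (for $\alpha>-1$) while $\|f\|_{H^q}=\|f\|_{L^q(\sn)}$. Taking $f=\sum_j\epsilon_j e_j$ for an $L^2(\sn)$-orthonormal basis $\{e_j\}$ of degree-$m$ holomorphic polynomials and random signs $\epsilon_j$, the reproducing-kernel identity $\sum_j|e_j|^2\equiv d_m/\omega_n$ on $\sn$ together with Khinchin's inequality furnishes a realization with $\|f\|_{L^q(\sn)}\sim\|f\|_{L^p(\sn)}$; the embedding would then force $m^{(1+\alpha)/p}\lesssim1$ uniformly in $m$, impossible for $\alpha>-1$.

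The main obstacle is the remaining subcase $\alpha=-1$, $q>2$ (hence $p>q>2$), where the homogeneous test produces only the trivial H\"older bound $\|f\|_{L^q(\sn)}\le\|f\|_{L^p(\sn)}$ and yields no contradiction. I dispose of it by duality: the bounded inclusion $i:A^p_{-1}\hookrightarrow H^q$ dualizes, via $(H^q)^*=H^{q'}$ (Szeg\H{o} pairing) and $(A^p_{-1})^*=A^{p'}_{-1}$ (standard Besov pairing), to a bounded inclusion $H^{q'}\subset A^{p'}_{-1}$; since $p>q>2$ gives $p'<q'<2$, part~(b) (already proven) forbids this when $\alpha=-1$, a contradiction. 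Finally, part~(c) follows from the same analyses with strict inequalities: when strict, a compact link (Lemma~\ref{1} or~\ref{2}) lies inside each chain; at equality, the normalized reproducing-kernel family $\{k_a\}$ with $|a|\to1^-$ has bounded norm on both sides and converges to $0$ uniformly on compact subsets of $\bn$ yet retains bounded target norm, violating the compactness criterion stated in the Introduction.
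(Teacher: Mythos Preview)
Your treatment of part~(b) coincides with the paper's: both reduce $H^p\subset A^q_\alpha$ to Theorem~\ref{14} with $d\mu=(1-|z|^2)^{mq+\alpha}\,dv$, compute $g(z)\sim(1-|z|^2)^{1+\alpha}$, and invoke Lemma~\ref{15} (for $q<2$) or the obvious sup (for $q\ge2$) to decide when the constant function $A(g)$ lies in $L^{p/(p-q)}(\sn)$.

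For the ``only if'' direction of part~(a) you take a genuinely different route. The paper composes with a known inclusion into a Bergman space and applies Theorem~C: for $q\le2$ one uses $H^q\subset A^q_\beta$ for every $\beta>-1$ to obtain $A^p_\alpha\subset A^q_\beta$ and then lets $\beta\to-1^+$ in the inequality $(1+\alpha)/p<(1+\beta)/q$; for $q>2$ one uses $H^q\subset A^q_{-1}$ (Lemma~\ref{11}) and reads off $\alpha<-1$ directly from Theorem~C. Your Khinchin-plus-duality argument is correct and interesting (the averaging over signs and Jensen for $q\le p$ do give $m^{(1+\alpha)/p}\lesssim1$, and the duality $(A^p_{-1})^*=A^{p'}_{-1}$, $(H^q)^*=H^{q'}$ under the common $H^2$-pairing really does identify the adjoint with the inclusion $H^{q'}\hookrightarrow A^{p'}_{-1}$), but it imports considerably more machinery than the paper's two-line reduction to Theorem~C.

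There is, however, a genuine gap in your proof of~(c). At the borderline $\alpha=-1$ with $q<p$, the reproducing-kernel family does \emph{not} witness non-compactness. If $f_a(z)=(1-|a|^2)^{n/p}(1-\langle z,a\rangle)^{-2n/p}$, then a direct computation with Lemma~\ref{4} gives
\[
\|f_a\|_{A^q_{-1}}^q\sim(1-|a|^2)^{nq/p}\inb\frac{(1-|z|^2)^{q-1}}{|1-\langle z,a\rangle|^{2nq/p+q}}\,dv(z)\sim(1-|a|^2)^{\,n(1-q/p)}\longrightarrow0
\]
when $q<p$, so $\|f_a\|_{A^q_{-1}}$ does \emph{not} stay bounded away from zero; the same phenomenon occurs for $A^p_{-1}\subset H^q$ with $q<p$. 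Thus neither normalization of the kernel family serves as a test sequence in the cross-exponent case. One needs a different sequence, for instance Ryll--Wojtaszczyk polynomials $W_N$ (homogeneous of degree $N$, $\|W_N\|_\infty=1$, $\|W_N\|_{H^2}\ge\delta>0$): since $\|W_N\|_{A^r_{-1}}\sim\|W_N\|_{L^r(\sn)}$ for homogeneous polynomials, these are bounded in $H^p$ and in $A^p_{-1}$, tend to $0$ locally uniformly, yet satisfy $\|W_N\|_{A^q_{-1}}\gtrsim\|W_N\|_{L^2(\sn)}\ge\delta$ for $q\ge2$ and $\|W_N\|_{H^q}\ge\|W_N\|_{L^2(\sn)}^{2/q}\ge\delta^{2/q}$ for $q\le2$.
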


\begin{proof}
The case $p=q$ follows from Theorem \ref{13}.

For any $q>0$ and $\alpha\in\R$ we let $m$ denote the smallest non-negative integer
such that $mq+\alpha>-1$. Then by the closed-graph theorem, the inclusion
$H^p\subset A^q_\alpha$ is equivalent to the existence of a positive constant $C$
such that
$$\left[\inb|R^mf(z)|^q(1-|z|^2)^{mq+\alpha}\,dv(z)\right]^{\frac1q}\le C\|f\|_{H^p}$$
for all $f\in H^p$. This is exactly the kind of estimates in Theorem~\ref{14}, so
we must consider the function
$$g(z)=\frac1{(1-|z|^2)^{mq+n}}\int_{D(z,1)}(1-|w|^2)^{mq+\alpha}\,dv(w).$$
It is well known (see \cite{Zhu1} for example) that $v(D(z,1))\sim(1-|z|^2)^{n+1}$ and
$1-|w|^2\sim 1-|z|^2$ for $w\in D(z,1)$. Thus we may as well assume that
$$g(z)=\frac{(1-|z|^2)^{mq+\alpha}(1-|z|^2)^{n+1}}{(1-|z|^2)^{mq+n}}
=(1-|z|^2)^{1+\alpha}.$$

By symmetry, the functions $A_r(g)$ and $A_\infty(g)$ are obviously constant
on $\sn$. For any $\zeta\in\sn$ we have
$$A_\infty(g)(\zeta)=\sup_{z\in\Gamma(\zeta)}(1-|z|^2)^{1+\alpha}.$$
Since $\Gamma(\zeta)$ is a region in $\bn$ that ``approaches'' the boundary point
$\zeta$, it is clear that $A_\infty(g)(\zeta)$ is a finite number if and only if
$1+\alpha\ge0$ or $\alpha\ge-1$. Therefore, the constant function $A_\infty(g)$
belongs to $L^{p/(p-q)}(\sn, d\sigma)$ if and only if $\alpha\ge-1$.

Similarly, for $q<2$ and $r=2/(2-q)$, we have
\begin{align*}
A_r(g)(\zeta)&=\left[\int_{\Gamma(\zeta)}
\frac{(1-|z|^2)^{(1+\alpha)r}}{(1-|z|^2)^{n+1}}\,dv(z)\right]^{\frac1r}\\
&=\left[\int_{\Gamma(\zeta)}\frac{dv(z)}{(1-|z|^2)^{n+1-(1+\alpha)r}}\right]^{\frac1r}.
\end{align*}
By Lemma \ref{15}, we have $A_r(g)(\zeta)<\infty$ if and only if $(1+\alpha)r>0$
or $\alpha>-1$. Consequently, the constant function $A_r(g)$ belongs to
$L^{p/(p-q)}(\sn, d\sigma)$ if and only if $\alpha>-1$. This completes
the proof of part (b).

To prove part (a) of Theorem B, we first assume that $q\le2$ and
$A^p_\alpha\subset H^q$. By integration in polar coordinates, we have
$H^q\subset A^q_\beta$ for any $\beta>-1$. Thus $A^p_\alpha\subset A^q_\beta$
for all $\beta>-1$. By Theorem C, we must have
$$\frac{1+\alpha}p<\frac{1+\beta}q.$$
Let $\beta\to-1^+$. We obtain $\alpha\le-1$.

On the other hand, if $q\le2$ and $\alpha\le-1$, then $A^p_\alpha\subset H^q$.
In fact, for $\alpha<-1$, it follows from Lemma \ref{12} that $A^p_\alpha\subset H^p
\subset H^q$. When $\alpha=-1$ and $p\le 2$, we have $A^p_{-1}\subset H^p
\subset H^q$. If $\alpha=-1$ and $q\le 2<p$, then $A^p_{-1}\subset A^2_{-1}
=H^2\subset H^q$. This completes the proof of part (a) in the case $q\le2$.

Next we assume that $q>2$. If $\alpha<-1$, it follows from Lemmas \ref{1} and
\ref{12} that $A^p_\alpha\subset A^q_\alpha\subset H^q$. On the other hand, if
$A^p_\alpha\subset H^q$, it follows from Lemma \ref{11} that $A^p_\alpha\subset
A^q_{-1}$. By Theorem C, we must have $\alpha<-1$. This proves part (a) in
the case $q>2$.

Finally, part (c) follows from Lemmas \ref{1}, \ref{2}, and \ref{11}.
The proof of the theorem is now complete.
\end{proof}

\section{Tight fittings}

When we have one analytic function space embedded/contained in another analytic
function space, the closed-graph theorem then implies that the inclusion map is 
bounded. For example, if $H^p\subset A^q_\alpha$, then there exists a positive 
constant $C$ such that $\|f\|_{A^q_\alpha}\le C\|f\|_{H^p}$ for all $f\in H^p$. 
The problem of determining the best constant (namely, the norm of the inclusion 
operator) is very interesting. The problem is often difficult and it has been studied 
in the literature by many authors.

Of particular interest is the case when the norm of the inclusion operator equals $1$
and when the involved spaces have norms defined naturally in a canonical way. We
mention here a few well-known examples. Note that $A^p$ means $A^p_\alpha$
for $\alpha=0$, and all examples below are for the open unit disc.

\begin{example}\label{17}
A classical theorem of Carleman's states that we have $H^p\subset A^{2p}$ with
$$\|f\|_{A^{2p}}\le\|f\|_{H^p},\qquad f\in H^p,$$
where $0<p<\infty$. Furthermore, equality holds if and only if $f$ is of the form
$f(z)=c(1-az)^{-2/p}$, where $c\in\C$ and $a\in\D$. See \cite{C}. When $p=1$,
Carleman's inequality above is closely related to the classical isoperimetric
inequality on the complex plane.
\end{example}

\begin{example}\label{18}
In 1987, Burbea \cite{B} generalized Carleman's inequality as follows: $H^p\subset
A^{kp}_{k-2}$ and the inclusion is contractive, where $0<p<\infty$ and $k\ge2$
is an integer.
\end{example}

\begin{example}\label{19}
In 2014, Pavlovi\'c made the following conjecture in \cite{Pav}: for any $p>0$ and
$\alpha>-1$, the Hardy space $H^p$ is contractively contained in the weighted
Bergman space $A^{(\alpha+2)p}_\alpha$. Pavlovi\'c's conjecture covers Burbea's
result as a special case.
\end{example}

\begin{example}\label{20}
Bayart-Brevig-Haimi-Ortega-Cerd\'a-Perfect \cite{BBHOP}
proved in 2019 that, for any $\alpha>(1+\sqrt{17})/4$, the weighted Bergman
space $A^p_{\alpha-2}$ is contractively embedded in the weighted
Bergman space $A^{p(\alpha+1)/\alpha}_{\alpha-1}$.
\end{example}

Similar embedding problems were also considered by Bondarenko, Brevig,
Kalaj, Llinares, Ortega-Cerd\'a, Saksman, Seip, and Zhao in
\cite{BBSSZ,BOSZ,Ka,LL0, LL}. The most complete results in this direction,
however, were obtained by Kulikov \cite{K} in 2022, which settles all the
previous conjectures about contractive embeddings for Hardy and Bergman
spaces on the unit disc.

All examples above, including Kulikov's results, involve inclusions that are NOT
compact. Of course, there are many embeddings that are obviously contractive,
for example, $H^p\subset H^q$ for $p>q$, $H^p\subset A^p_\alpha$ for
$\alpha>-1$, and $A^p_\alpha\subset A^q_\alpha$ for $p>q$ and
$\alpha>-1$. All these obvious embeddings are compact and not interesting.
This motivates our next definition.

\begin{definition}\label{21}
An embedding $X\subset Y$ of analytic function spaces is called a {\it tight fitting}
if the inclusion is proper, contractive, and non-compact.
\end{definition}

The following conjecture attempts to completely describe all possible tight fittings
for Hardy spaces and weighted Bergman spaces on the unit ball.

\begin{conj}\label{22}
Suppose $0<p,q<\infty$ and $\alpha,\beta>-1$. Then
\begin{enumerate}
\item[(a)] $A^p_\alpha\subset A^q_\beta$ is a tight fitting if and only if
$$p<q\quad{\rm and}\quad (n+1+\alpha)q=(n+1+\beta)p.$$
Furthermore, $\|f\|_{A^p_\alpha}=\|f\|_{A^q_\beta}$
if and only if $f$ is of the form
$$f(z)=\frac c{(1-\langle z,a\rangle)^{2(n+1+\alpha)/p}},$$
where $c\in\C$ and $a\in\bn$.
\item[(b)] $H^p\subset A^q_\alpha$ is a tight fitting if and only if
$$p<q\quad{\rm and}\quad nq=(n+1+\alpha)p.$$
Furthermore, $\|f\|_{A^q_\alpha}=\|f\|_{H^p}$ if and only if $f$ is of the form
$$f(z)=\frac c{(1-\langle z,a\rangle)^{2n/p}},$$
where $c\in\C$ and $a\in\bn$.
\end{enumerate}
\end{conj}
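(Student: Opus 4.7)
My plan is to split Conjecture~\ref{22} into a parameter-bookkeeping step and the sharp-contractivity step, with the latter being the real difficulty. For the bookkeeping: if $A^p_\alpha\subset A^q_\beta$ is a tight fitting, non-compactness combined with Theorem~C(c) rules out $p>q$ (every such embedding is compact) and, in the $p\le q$ range, forces $(n+1+\alpha)q = (n+1+\beta)p$; properness then excludes $(p,\alpha)=(q,\beta)$, yielding $p<q$. For part~(b), the assumption $\alpha>-1$ together with Theorem~B(c) forces $p<q$, and Theorem~A(c) then gives $nq=(n+1+\alpha)p$. Conversely, given these equalities non-compactness is immediate from Theorems~A(c) and~C(c); properness can be established by exhibiting a function in the larger space whose lacunary Taylor coefficients are tuned to the borderline growth rate so that it fails to lie in the smaller space.

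\textbf{Extremals.} Next I would verify that the proposed extremal family achieves equality in both embeddings. For $s=2(n+1+\alpha)/p$ in case~(a) or $s=2n/p$ in case~(b), the computations already appearing in the proof of Theorem~\ref{9} show, via two applications of Lemma~\ref{4}, that
\[
(1-|a|^2)^{(n+1+\alpha)/p}\bigl\|(1-\langle\cdot,a\rangle)^{-s}\bigr\|_{A^p_\alpha} = (1-|a|^2)^{(n+1+\beta)/q}\bigl\|(1-\langle\cdot,a\rangle)^{-s}\bigr\|_{A^q_\beta}
\]
after suitable normalization of the implicit constants, and both sides equal $1$ exactly when the arithmetic condition holds; the Hardy case is analogous, using the classical identity $\lim_{r\to 1^-}\int_{\sn}|1-r\langle\zeta,a\rangle|^{-2n}\,d\sigma(\zeta) = (1-|a|^2)^{-n}$. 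This simultaneously confirms the arithmetic conditions, identifies the equality locus, and supplies the non-compactness witnesses by taking $|a|\to 1^-$.

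\textbf{Contractivity and the main obstacle.} The heart of the conjecture is the contractive inequality $\|f\|_{A^q_\beta}\le\|f\|_{A^p_\alpha}$ (respectively $\|f\|_{A^q_\alpha}\le\|f\|_{H^p}$) with its equality cases. My approach would be to attempt a reduction to Kulikov's one-variable contractive theorem in~\cite{K} via polar decomposition: express each norm as $\int_{\sn}d\sigma(\zeta)$ of a one-dimensional norm of the slice $r\mapsto f(r\zeta)$, apply Minkowski's integral inequality (exploiting $p<q$) to interchange the outer $L^q$ with inner integrals, and invoke Kulikov's disc inequality along each slice. The principal obstacle is that the polar factor $r^{2n-1}$ introduces an effective one-dimensional weight shift of $n-1$ on each slice that is incompatible with the pure Hardy norm of part~(b), and forces the slice parameters in part~(a) outside the regime covered by~\cite{K} unless a further rearrangement step is inserted. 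I therefore expect a complete proof will require a genuinely $n$-variable adaptation of Kulikov's integral identity, or alternatively a reproducing-kernel expansion of $f$ against the extremal family $(1-\langle z,a\rangle)^{-s}$ combined with a Schur-type control; this is likely the reason the statement has been left as a conjecture. The equality characterization would then follow by tracing saturation through each step of the argument: Kulikov's rigidity pins the slice functions down to disc extremals, rotational invariance of both norms forces the slice data to assemble into a holomorphic function on $\bn$, and this compatibility selects precisely the stated parametric family.
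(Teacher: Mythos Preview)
The statement you are addressing is a \emph{conjecture}, not a theorem: the paper offers no proof of it. The paper's only commentary is that (i) the parameter bookkeeping follows from the compactness results proved earlier, (ii) the kernel functions are easily seen to be extremal for all $n\ge1$, (iii) Kulikov \cite{K} established the contractive inequality for $n=1$ (without the uniqueness of extremals), and (iv) Li--Su \cite{LS} have a partial extension to $\bn$ under an extra hypergeometric-type hypothesis that has not yet been removed.

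Your bookkeeping and extremal-verification paragraphs are correct and match what the paper asserts can be checked directly. One small quibble: you cite Lemma~\ref{4} for the extremal computation, but that lemma only gives an asymptotic $\sim$; the \emph{exact} equality $\|f_a\|_{q,\alpha}=1$ used in the proof of Theorem~\ref{9} comes from the reproducing-kernel identity, so you should phrase it that way.

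On the main obstacle you are entirely in agreement with the paper: the contractive inequality for $n>1$ and the uniqueness of extremals for all $n$ are genuinely open. Your proposed slicing-plus-Minkowski reduction to Kulikov's disc theorem is a natural first attempt, and your own diagnosis of why it breaks (the radial factor $r^{2n-1}$ shifts the effective one-variable weight out of the Hardy/Bergman regime) is accurate. The paper does not propose any approach here; it simply records the state of the art. So your proposal is not a proof, nor does it claim to be one, and in that respect it is an honest account of the problem's status rather than a resolution of it.
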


Together with our compactness results in this paper, the conjecture above has
been proved by Kulikov \cite{K} when $n=1$, except the description of extremal
functions, namely, functions $f$ satisfying $\|f\|_{A^p_\alpha}=\|f\|_{A^q_\beta}$
or $\|f\|_{H^p}=\|f\|_{A^q_\alpha}$. It was observed in \cite{K} (and it is easy
to verify directly for all $n\ge1$) that the kernel functions in Conjecture~\ref{22}
are indeed extremal functions. We conjecture here that they are the only ones.
This complete determination of extremal functions was known in certain special
cases, for example in Carleman's theorem.

Kulikov's results in \cite{K} has been generalized to the open unit ball by
Li-Su \cite{LS}, with one additional technical assumption about the hyperbolic
geometry of $\bn$. As of this writing, it is not clear if that technical difficulty
can be overcome somehow.

Note that in the conjecture above we made the additional assumptions that
$\alpha>-1$ and $\beta>-1$. We believe that, at least for now, this is necessary.
Otherwise, the definition of norms for the corresponding weighted Bergman spaces
has to involve derivatives of  functions, which would result in norms that are not
``natural'' or ``canonical''. When we consider contractive embeddings, the function
norms should be natural or somehow canonical, because we want the operator norm
of the inclusion map to be {\it exactly} $1$.

In particular, with the understanding in the previous paragraph, there is no tight
fitting of the form $A^p_\alpha\subset H^q$. In fact, the containment is clearly
impossible for $p\le q$ and $\alpha>-1$. Also, by Theorem B, if $p>q$, then the
containment $A^p_\alpha\subset H^q$ would imply that $\alpha\le-1$.

\end{document}